\newcommand{\bN}{\mathbb{N}}
\newcommand{\bR}{\mathbb{R}}
\newcommand{\bZ}{\mathbb{Z}}
\newcommand{\bP}{\mathbb{P}}
\newcommand{\cA}{\mathcal{A}}
\newcommand{\cB}{\mathcal{B}}
\newcommand{\cS}{\mathcal{S}}
\newcommand{\rmd}{\mathrm{d}}
\newcommand{\rmi}{\mathrm{i}}
\newcommand{\bfc}{\boldsymbol{c}}
\newcommand{\Ds}[1][N]{D^{\ast}_{#1}}
\newcommand{\e}{e}
\newcommand{\eps}{\varepsilon}
\DeclareMathOperator{\id}{id}
\theoremstyle{plain}		\newtheorem{theorem}{Theorem}[section]
						\newtheorem{lemma}[theorem]{Lemma}
\theoremstyle{definition}	\newtheorem{proposition}[theorem]{Proposition}
						\newtheorem{remark}[theorem]{Remark}
\title[Perturbed Halton--Kronecker sequences]{Sharp general and metric bounds for the star discrepancy of {perturbed} Halton--Kronecker sequences}
\author{Roswitha Hofer}
\thanks{The  authors are supported by the Austrian Science Fund (FWF), Projects F5505-26 and F5507-26, which are part of the Special Research Program ``Quasi-Monte Carlo Methods: Theory and Applications''}
\address{Institute of Fincancial Mathematics and Applied Number Theory, Johannes Kepler University  Linz}
\email{roswitha.hofer@jku.at}
\author{Florian Puchhammer}
\address{Institute of Fincancial Mathematics and Applied Number Theory, Johannes Kepler University  Linz}
 \email{florian.puchhammer@jku.at}
 \keywords{Digital sequences,  Discrepancy, Hybrid sequences, Kronecker sequences, Lacunary trigonometric products}
\subjclass[2010]{11K31, 11K38, 11K60} 
\begin{document}

\begin{abstract}

We consider the star discrepancy of two-dimensional sequences made up as a hybrid between a Kronecker sequence and a perturbed Halton sequence in base 2, where the perturbation is achieved by a digital-sequence construction in the sense of Niederreiter whose generating matrix contains a periodic perturbing sequence of a given period length. Under the assumption that the Kronecker sequence involves a parameter with bounded continued fraction coefficients sharp discrepancy estimates are obtained. Furthermore, we study the problem from a metric point of view as well. Finally, we also present sharp general and tight metric bounds for certain lacunary trigonometric products which appear to be strongly related to these problems.
\end{abstract}

\maketitle

\section{Introduction and statement of the results}

We investigate distribution properties of \emph{perturbed Halton--Kronecker sequences}, i.e., two-dimensional hybrid sequences $(z_k(n))_{k\geq0}$ of the form
$$
z_k(n)=\left( x_k(n),\{k\alpha\} \right),
$$
where $(\{k\alpha\})_{k\geq0}$ denotes the \emph{Kronecker sequence} with (irrational) parameter $\alpha$ and where $(x_k(n))_{k\geq0}$ is a \emph{perturbed Halton sequence} in base 2. The latter is a special instance of a digital sequence in the sense of Niederreiter (\cites{Nie87}) and is constructed as follows.

For the construction of a more generic sequence $(x_k)_{k\geq0}$ we fix an infinite matrix $C$ over $\{0,1\}$, a so-called \emph{generating matrix}, as the identity whose first row is perturbed by a sequence $\bfc=(c_0,c_1,c_2,\ldots)$ in $\{0,1\}$. More precisely,
\begin{equation}
\label{eqn:C}
 C=
 \begin{pmatrix}
  c_0		&c_1	&c_2	&\cdots \\
  0		&1		&0		&\cdots \\
  0		&0		&1		&\ddots \\
  \vdots	&\vdots	&\ddots 	&\ddots
 \end{pmatrix}.
\end{equation}
Furthermore, for each non-negative integer $k$ we assemble the dyadic digits of its binary expansion $k_0+k_12+k_22^2+\cdots$ into the vector $(k_0,k_1,k_2,\ldots)=:\boldsymbol{k}$ and compute $(y_0,y_1,y_2,\ldots)=C\cdot \boldsymbol{k}^{\top}$ modulo $2$. 
Subsequently, we define the $k$th element of our digital sequence $(x_k)_{k\geq 0}$ as
\begin{equation*}
 x_k=\frac{y_0}{2}+\frac{y_1}{2^2}+\frac{y_2}{2^3}+\cdots.
\end{equation*}
Taking the \emph{perturbing sequence} in the special form
\begin{equation}
\label{eqn:c}
 \bfc=(\underbrace{10\ldots0}_{n}\underbrace{10\ldots0}_{n}\ldots)
\end{equation}
with period length $n$ yields the sought sequence $(x_k(n))_{k\geq0}$.

We intend to use perturbed Halton--Kronecker sequences to \emph{approximate} uniform distribution on the  unit square $[0,1)^{2}$. The \emph{star discrepancy} serves as a quality measure for how evenly such a sequence is distributed. For the first $N$ elements of a sequence $\cS=(s_0,s_1,\ldots)$ in $[0,1)^{2}$ it is defined as
\begin{equation*}
 \Ds(\cS)=\sup_{\boldsymbol{x}=(x_1,x_2)\in(0,1]^{2}}\frac{1}{N}\Big|\cA_N(\cS,[\boldsymbol{0},\boldsymbol{x}))-N\lambda([\boldsymbol{0},\boldsymbol{x}))    \Big|,
\end{equation*}
where $\lambda([\boldsymbol{0},\boldsymbol{x}))$ denotes the two-dimensional Lebesgue measure of the box $[\boldsymbol{0},\boldsymbol{x})=[0,x_1)\times[0,x_2)$ and where
\begin{equation*}
 \cA_N(\cS,[\boldsymbol{0},\boldsymbol{x}))=\#\Big(\{s_n:~0\leq n<N\}\cap[\boldsymbol{0},\boldsymbol{x})\Big).
\end{equation*}
counts the number of elements of the initial segment of $\cS$ of size $N$ which lie in  $[\boldsymbol{0},\boldsymbol{x})$. If it is clear from the context which sequence we consider, we may omit the respective argument. Certainly, this entity can be extended to unanchored boxes and higher dimensions as well.
For an extensive survey on $\Ds$ and the sequences involved we refer to the books \cites{DicDig10,MatGeo99,NieRan92}.

Before we present the main results of this paper we require some notation. In what follows we write $A(N)\ll_{X} B(N)$ if $|A(N)|\leq c_{X} |B(N)|$ for all $N$ large enough and $A(N)\gg_{X} B(N)$ if $|A(N)|\geq c_{X} |B(N)|$ for infinitely many $N\in\bN$; here $c_{X}>0$ is a constant exclusively depending on a collection of parameters indicated by $X$.
First of all, we consider perturbed Halton--Kronecker sequences in the case where $\alpha$ has bounded continued fraction coefficients. 
\begin{theorem}
 \label{thm:genupper}
 Let $n\in\bN$ and $\alpha\in(0,1)$ with bounded continued fraction coefficients. Then the star discrepancy of the first $N$ elements of the sequence $(z_k(n))_{k\geq 0}$ satisfies
 \begin{equation*}
  N \Ds(z_k(n))\ll_{n} N^{a(n)+\eps}
 \end{equation*}
for all $\eps>0$, where
\begin{equation}
\label{eqn:an}
 a(n)=\log_{2^n}\left( \cot\frac{\pi}{2(2^n+1)}\right).
\end{equation}
\end{theorem}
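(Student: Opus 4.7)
My plan is to derive the estimate through the two-dimensional Erdős--Turán--Koksma inequality. Specifically, for $H$ polynomial in $N$ I would bound
\[
N \Ds(z_k(n)) \ll \frac{N}{H+1} + \sum_{0<\|\mathbf{h}\|_\infty \leq H} \frac{|S_N(\mathbf{h})|}{r(\mathbf{h})},
\]
with $S_N(h_1,h_2) = \sum_{k=0}^{N-1} \exp(2\pi\rmi(h_1 x_k(n) + h_2 k\alpha))$ and $r(\mathbf{h}) = \max(1,|h_1|)\max(1,|h_2|)$. The contributions with $h_1 = 0$ (a classical Weyl sum for the Kronecker sequence, controlled by the continued-fraction expansion of $\alpha$) and with $h_2 = 0$ (an exponential sum over the purely one-dimensional perturbed digital sequence) yield at most polylogarithmic contributions and can be treated by standard methods. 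The case of even $h_1$ collapses into the unperturbed Halton--Kronecker setting because the perturbation affects only the first binary digit, and so one is reduced to the cross-term case $h_1$ odd, $h_2 \neq 0$.

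The key structural observation is that for odd $h_1$ one has
\[
\exp(2\pi\rmi h_1 x_k(n)) = (-1)^{k_n + k_{2n} + \cdots}\, \exp(2\pi\rmi h_1 \phi_2(k)),
\]
where $\phi_2$ denotes the base-$2$ radical-inverse function. For dyadic $N = 2^M$, writing $k = \sum_{j=0}^{M-1} k_j 2^j$ and summing each bit $k_j$ independently factorises $S_N$ completely into
\[
|S_N(h_1,h_2)| = N\prod_{j=0}^{M-1}\bigl|\cos\bigl(\pi(h_2\alpha 2^j + h_1 2^{-j-1} + \tfrac{1}{2}\mathbf{1}[j>0,\,n\mid j])\bigr)\bigr|.
\]
At the positions $j\in\{n,2n,3n,\ldots\}$ the extra $\tfrac{1}{2}$ turns cosines into sines, so $S_N$ is a lacunary trigonometric product over $(h_2\alpha 2^j)_j$ with sines precisely at the arithmetic progression of indices divisible by $n$.

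Grouping the factors into blocks of length $n$ and telescoping via the classical identity $\prod_{i=0}^{n-1}\cos(\pi 2^i\phi) = \sin(\pi 2^n\phi)/(2^n\sin(\pi\phi))$ reduces the whole product, modulo a benign $2^{-nL}$ factor, to $\prod_{j=1}^{L}|\tan(\pi 2^{jn} h_2\alpha + \cdots)|$ with $L\approx M/n$. The worst case of this tangent product is realised on the unique $2$-cycle of the doubling-by-$2^n$ map $\theta\mapsto\{2^n\theta\}$ inside $\{m/(2^n+1) : m\in\bZ\}$, namely at $\theta = 2^{n-1}/(2^n+1)$, where every tangent equals $\cot(\pi/(2(2^n+1)))$; the contribution is then exactly $\cot(\pi/(2(2^n+1)))^L = N^{a(n)}$, which exposes the origin of the exponent $a(n)$.

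The main obstacle, and the place where the bounded-partial-quotients hypothesis is genuinely used, is to show that this extremal value is essentially an upper bound for the true tangent product at $\alpha$ along the lacunary sequence $(2^{jn})_j$ as well. A term-by-term bound from $\|2^{jn+1}\alpha\| \gs 2^{-jn-1}$ is hopelessly crude and would yield a useless superpolynomial estimate; the sharp result needs a global comparison of the product to its mean via the Jensen-type identity $\int_0^1 \log|\tan(\pi x)|\,\rmd x = 0$, so that the only net growth is concentrated at the discrete peaks captured above. This is precisely the sharp lacunary-trigonometric-product estimate announced in the abstract. Once it is in hand, summing $r(\mathbf{h})^{-1}\cdot N^{a(n)+\eps}$ over the range of $\mathbf{h}$ and passing from $N=2^M$ to arbitrary $N$ by the usual dyadic decomposition produces the claimed bound.
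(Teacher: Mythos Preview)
Your overall framework---applying the two-dimensional Erd\H{o}s--Tur\'an--Koksma inequality and factorising $S_N(h_1,h_2)$ for dyadic $N$ into a lacunary product---is a legitimate alternative to the paper's route. The paper instead decomposes the interval $[0,\beta)$ for the digital coordinate into dyadic pieces and then applies the \emph{one}-dimensional Erd\H{o}s--Tur\'an inequality to the resulting Kronecker subsequences (Proposition~\ref{prop:genupper} via Lemmas~\ref{lem:genupper_unsimplified} and~\ref{lem:2additive}). Both approaches land on the same lacunary product $\Pi_{r,\bfc}$, and your block telescoping is precisely the function $G_n$ of~\eqref{eqn:gsimplified}.

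There is, however, a genuine gap. You misidentify both where the bounded-partial-quotients hypothesis enters and how the key product estimate is obtained. The bound
\[
\Pi_{r,\bfc^{(\ell)}}(\alpha)\ll_n 2^{-r}\Bigl(\cot\tfrac{\pi}{2(2^n+1)}\Bigr)^{r/n}
\]
(Theorem~\ref{thm:trigprod}) holds \emph{uniformly for every} $\alpha\in[0,1]$, with no diophantine input at all. Its proof is a deterministic pointwise inequality generalising Gel{\cprime}fond: Lemma~\ref{lemma:gelfond} shows that for every $x\in[0,1]$ either $G_n(x)\le G_n(\xi_n)$ or $G_n(x)\,(G_n\circ f_n)(x)\le G_n(\xi_n)^2$, so consecutive blocks of the product can be paired and each pair is dominated by the extremal value. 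Your suggested route through the Jensen-type identity $\int_0^1\log|\tan(\pi x)|\,\rmd x=0$ would at best control an \emph{average} over $\alpha$ and cannot deliver a bound valid for a fixed $\alpha$; indeed the metric part of the paper (Proposition~\ref{prop:trigprodmetric}) shows that the typical size of $\Pi$ is strictly smaller than the uniform worst case.

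The bounded continued fraction hypothesis is used only for the terms that do \emph{not} carry the perturbed lacunary product: in the paper's decomposition these are the sums $\sum_{\ell,h}(h\|2^\ell h\alpha\|)^{-1}$ handled by Lemma~\ref{lem:alphafinitetype}; in your framework they are the $h_1=0$ Weyl sums and the even-$h_1$ (unperturbed Halton--Kronecker) contributions you dismissed as routine. So your diagnosis is inverted: the ``standard'' part is where the diophantine input lives, while the ``main obstacle'' is a uniform pointwise estimate requiring Gel{\cprime}fond's pairing argument, not averaging.
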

On the other hand, we can show that this bound is essentially sharp by utilizing a special $\beta$, as introduced by Shallit \cite{ShaSim79}, which has both bounded continued fraction coefficients and an explicitly known dyadic expansion. 

\begin{theorem}
 \label{thm:genlower}
 Let $n\in\bN$ and let $\alpha=\frac{2^n}{2(2^n+1)}+\beta$ with $\beta=\sum_{k\geq0}4^{-2^{k}}$. Then we have
 \begin{equation*}
  N \Ds(z_k(n))\gg N^{a(n)-\eps}
 \end{equation*}
for all $\eps>0$, where $a(n)$ is given by (\ref{eqn:an}).
\end{theorem}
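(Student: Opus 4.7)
The strategy is to exhibit, along a suitable subsequence $N\to\infty$ of powers of $2$, small integer frequencies $(h_1,h_2)$ for which the exponential sum
\[
S_N(h_1,h_2)\;=\;\sum_{k=0}^{N-1}\e\bigl(h_1 x_k(n)+h_2\{k\alpha\}\bigr),\qquad \e(t)=\exp(2\pi\rmi t),
\]
satisfies $|S_N(h_1,h_2)|\gs N^{a(n)-\eps}$. Since Koksma--Hlawka applied to the real and imaginary parts of the test function $(x,y)\mapsto\e(h_1 x+h_2 y)$ gives the standard bound $|S_N(h_1,h_2)|\ls |h_1|\,|h_2|\,N\,\Ds(z_k(n))$, any such lower bound on $|S_N|$ with bounded $(h_1,h_2)$ immediately yields the claimed lower bound on the star discrepancy.

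The first step is to exploit the structure of the generating matrix~(\ref{eqn:C}): since $C$ equals the identity except in its first row, the dyadic digits of $x_k(n)$ differ from those of the base-$2$ van der Corput value of $k$ only in the leading digit, which is $k_0+k_n+k_{2n}+\cdots\pmod 2$. Summing over $k<N=2^{nL}$ with the binary digits of $k$ grouped into blocks of length $n$, and using that $\e(h_2 k\alpha)$ is multiplicative over those digits, the sum $S_N$ factors, for $h_1$ odd, into a product of $L$ ``block sums'' indexed by $j=0,\ldots,L-1$. Each block sum evaluates, up to unit-modulus phases, to a trigonometric expression in $2^{nj}\alpha\bmod 1$, and for $\alpha$ close to $\frac{2^n}{2(2^n+1)}$ its modulus is $\asymp\cot(\pi/(2(2^n+1)))$ up to lower-order corrections. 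This is precisely the ``lacunary trigonometric product'' structure alluded to in the abstract.

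The specific rational $\frac{2^n}{2(2^n+1)}=\frac12-\frac{1}{2(2^n+1)}$ is calibrated so that each block sum attains its near-maximum modulus \emph{simultaneously} across $j=0,\ldots,L-1$: the $\sin(\pi 2^{nj}\cdot\tfrac{2^n}{2(2^n+1)})$ factor responsible for the smallness equals $\sin(\pi/(2(2^n+1)))$ uniformly in $j$, while the remaining $2^n-1$ sine factors per block stay bounded away from zero. Shallit's $\beta=\sum_{k\ge 0}4^{-2^k}$ plays a dual role: it makes $\alpha$ irrational with bounded continued-fraction coefficients (so that the upper bound of Theorem~\ref{thm:genupper} applies), and its double-exponentially sparse dyadic expansion ensures $\|2^{nj}\beta\|$ is tiny for all but $O(\log L)$ indices $j\le L-1$. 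For these ``good'' indices the $\beta$-perturbation changes the corresponding block sum by a factor $1+O(2^{-cj})$, whose product across $j$ is $O(1)$.

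The main technical point will be the careful accounting of the few ``bad'' indices (those near a power-of-$2$ position of the binary expansion of $\beta$), where the block sum could in principle degrade substantially; however, because such indices are only logarithmically many in $L$, their cumulative multiplicative effect on $|S_N|$ is absorbable in a factor $N^{o(1)}$. Multiplying the surviving block contributions gives $|S_N|\gs (\cot(\pi/(2(2^n+1))))^{L(1-o(1))}=N^{a(n)-o(1)}$ along $N=2^{nL}$, which, after choosing $\eps$ to accommodate the $o(1)$ losses, completes the proof.
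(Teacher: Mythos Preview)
Your overall strategy is sound and close to the paper's, but the paper takes a cleaner route and your sketch contains some inaccuracies in the block description that would need correcting.

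\textbf{Comparison with the paper.} Rather than applying the two-dimensional Koksma--Hlawka inequality with a trigonometric test function $\e(h_1x+h_2y)$, the paper first restricts the first coordinate to the dyadic interval $[0,1/2)$. Since $x_k(n)\in[0,1/2)$ iff the weighted digit sum $s_{\bfc}(k)$ is even, this reduces the problem to a \emph{one}-dimensional discrepancy lower bound for the subsequence $(\{m_k\alpha\})_{k\ge0}$, where $(m_k)$ runs through integers with $s_{\bfc}(m_k)\equiv0\pmod2$. A one-dimensional Koksma--Hlawka step then yields the exponential sum $\sum_{k<2^{nL-1}}\e(m_k\alpha)$, which factors \emph{exactly} as $2^{nL-1}\Pi_{nL,\bfc}(\alpha)$ up to a bounded geometric-series error. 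Your two-dimensional test function $\e(x_k(n))$ carries along the entire van der Corput tail $\sum_{j\ge1}k_j/2^{j+1}$, so your factored product has extra shifts $h_1/2^{j+1}$ inside every cosine and sine. These shifts are harmless (they decay geometrically and the finitely many small-$j$ factors are nonzero constants), but the paper's route avoids them altogether and lands directly on the canonical product $\Pi_{nL,\bfc}(\alpha)$.

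\textbf{Inaccuracies in your block description.} Your claim that ``the $\sin(\pi 2^{nj}\alpha_0)$ factor responsible for the smallness equals $\sin(\pi/(2(2^n+1)))$'' is incorrect: in fact $|\sin(\pi 2^{nj}\alpha_0)|=\cos(\pi/(2(2^n+1)))$, which is close to~$1$, not small. The block contribution is $G_n(\xi_n)=2^{-n}\cot(\pi/(2(2^n+1)))$, and its being $<1$ comes from the $2^{-n}$, not from any individual trigonometric factor being small. Relatedly, ``the remaining $2^n-1$ sine factors per block'' does not match the natural digit-by-digit factorisation, which gives one sine and $n-1$ cosine factors per block; if you had a different product formula in mind you should make it explicit.

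\textbf{The genuine technical point you are deferring.} Saying that the bad indices are $O(\log L)$ in number is not enough: you also need each bad block factor to be bounded below by a \emph{uniform} positive constant, otherwise even one factor of size $2^{-L}$ would kill the estimate. This is exactly what the paper establishes, using the specific arithmetic of $\alpha=\alpha_0+\beta$: one checks $\inf_{\ell\ge0}\min_{\kappa\in\{0,1/2,1\}}|\{2^\ell\alpha\}-\kappa|>0$, which guarantees every $|\sin(2^{\ell}\alpha\pi)|$ and $|\cos(2^{\ell}\alpha\pi)|$ is bounded away from zero. With that in hand the paper bounds each ratio $S_\lambda,\,C_{\lambda,\nu}$ from below by $\max\{1-c_\nu(n)\delta_\ell,\Lambda_\nu\}\ge e^{-\bar c(n)\delta_\ell}$ and then uses $\sum_{\ell\le nL}\{2^\ell\beta\}=O(\log L)$ to absorb the total loss into $N^{-\eps}$. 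Your outline would need the same ingredient.
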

As a matter of fact, Theorem~\ref{thm:genupper} holds for a larger class of $\alpha$, i.e., for $\alpha$ of some finite type $\sigma\geq1$. Details on $\sigma$ can be found in Remark~\ref{rem:41} after the proof of the theorem. The primary interest, however, lies in $\alpha$'s with bounded continued fraction coefficients, since the Kronecker component satisfies an optimal discrepancy bound in this case.

\begin{remark}\label{rem:1}
In the limit case $n=\infty$, i.e. $C$ is the identity, $(x_k(n))_{k\geq0}$ becomes the pure Halton sequence. The \emph{Halton--Kronecker sequence} $(z_k(\infty))_{k\geq0}$ was originally studied by Niederreiter \cite{NieOnt09} and, recently,  by the first author together with Larcher and Drmota \cite{drmlarHK}, who obtained
 \begin{equation*}
  N \Ds(z_k(\infty))\ll_\alpha N^{1/2}\log N \ll_\eps N^{1/2+\eps}
 \end{equation*}
for every $\alpha\in(0,1)$ with bounded continued fraction coefficients and all $\eps>0$ (see also \cite{NieImp12}). Furthermore, for $\alpha=\sum_{k\geq0}4^{-2^{k}}$ the following inequality holds
 \begin{equation*}
  N \Ds(z_k(\infty))\gg N^{1/2}.
 \end{equation*}
\end{remark}

The lower bound of Theorem~\ref{thm:genlower} is in close connection to  one-dimensional subsequences of the pure Kronecker sequence, i.e., $(\{m_k\alpha\})_{k\geq0}$.  It is easily seen that \emph{evil Kronecker sequences}, which were studied by the first author together with Aistleitner and Larcher in \cite{AisOnp15} and are denoted by $(\{e_k \alpha\})_{k\geq 0}$, are directly linked to $(z_k(1))_{k\geq 0}$. Several techniques of our proof reach back to this paper. Here, the sequence of \emph{evil numbers} $(e_k)_{k\geq0}$ denotes the increasing sequence of non-negative integers whose sum of dyadic digits is even. Similarly, it turns out in the proof of Theorem~\ref{thm:genlower} that the sequence  $(m_{k})_{k\geq 0}$ related to  $(z_k(n))_{k\geq 0}$ is the increasing sequence of non-negative integers with an even sum of digits in base $2^{n}$, i.e.,
\begin{equation}
\label{eqn:mk}
 m_k=\mu_0+2\mu_1+2^2\mu_2+\cdots,~\mu_i\in\{0,1\},\text{ with }\mu_0+\mu_n+\mu_{2n}+\cdots\equiv 0\bmod{2}.
\end{equation}

Concerning the sharp exponent $a(n)$ in  Theorem~\ref{thm:genupper} and Theorem~\ref{thm:genlower} above some remarks are in order.  Prior to this paper, two results for specific $n$ are known to the authors, namely $n=1$ (see~\cite{AisOnp15}) and $n=\infty$ (see Remark~\ref{rem:1}). In the first case an exponent of $\log_4 3\approx0.79\ldots$ is obtained. Apparently, this coincides with $a(1)$. Hence, the current paper can be seen as an extension of \cite{AisOnp15}. In the second case, i.e. $n=\infty$, Remark~\ref{rem:1} states an exponent of $1/2$. Hence, naturally one would expect $a(n)$ to decrease from $\log_4 3$ to $1/2$. Surprisingly, the opposite is the case: $a(n)$ increases w.r.t. $n$. This means that if the density of $1$'s in the first row of our generating matrix $C$ decreases, the best possible bound for the  star discrepancy of the hybrid sequence grows. Figure~\ref{fig:an} shows a plot of $a(n)$ for $1\leq n\leq50$.

\begin{figure}[!hbtp]
\centering
\includegraphics[width=0.75\textwidth]{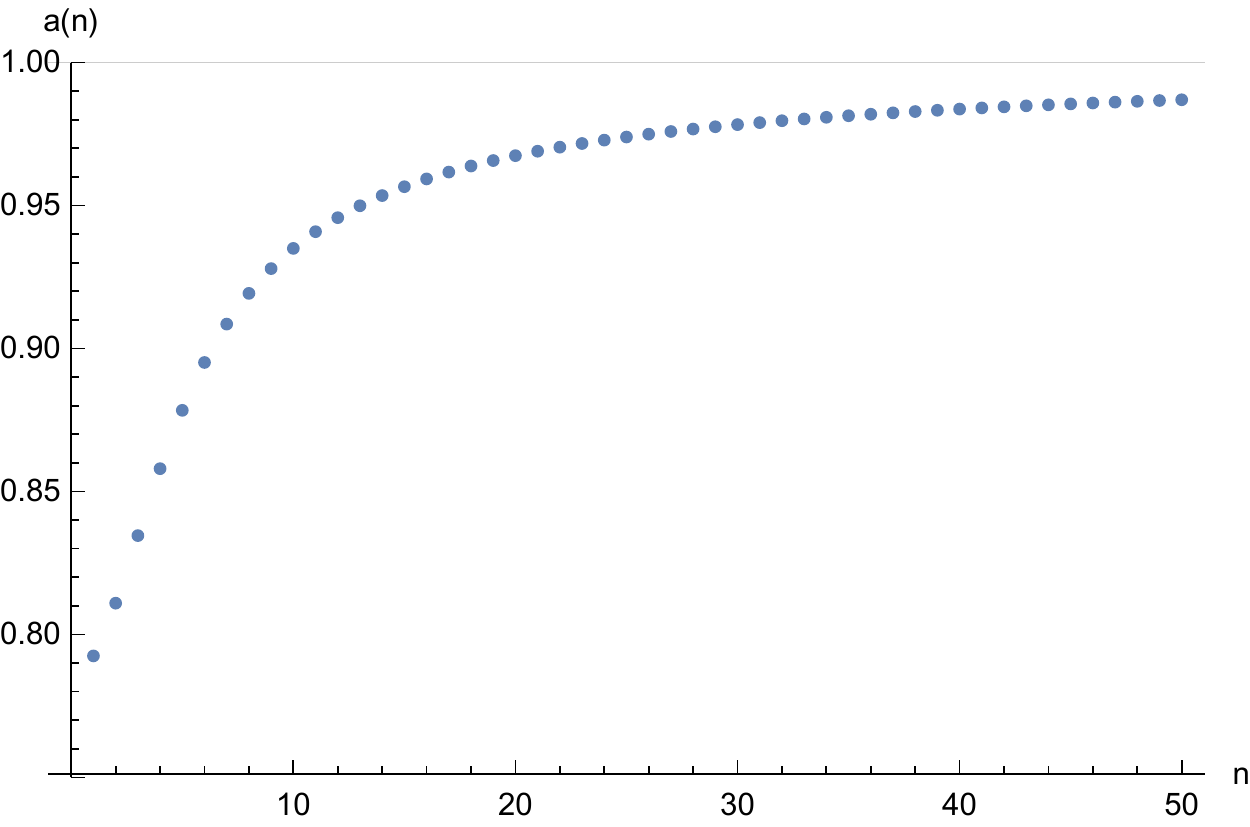}
\caption{Plot of the exponent $a(n)$ for $1\leq n\leq 50$.}
\label{fig:an}
\end{figure}

It is not hard to check that
\begin{equation*}
 \lim_{n\to\infty}a(n)=1.
\end{equation*}
Hence, our discrepancy estimate in Theorem~\ref{thm:genupper} approaches the trivial bound $D_N^*\leq 1$ for huge $n$. However, here we can refer to the result mentioned in Remark~\ref{rem:1} implying that the exponent of $N$ experiences a sudden drop by approximately $1/2$ in the unperturbed case $n=\infty$. 

More generally, for the star discrepancy of two-dimensional sequences it is known that
\begin{equation*}
 \Ds(\cS) \gg N^{-1} (\log N)^{1+\eta},\qquad \eta = 1/(32+4\sqrt{41})-\eps,
\end{equation*}
for all $\eps>0$ and all sequences $\cS$. The  existence of $\eta>0$ is due to a  break-through by Bilyk and Lacey in 2008 (see  \cite{BilOnt083}) and was recently quantified by the second author in \cite{PucOna16}. Furthermore, examples of sequences are known which satisfy the essentially same upper bound, but with $(\log N)^2$.

 Individually, the perturbed Halton sequence as well as the Kronecker sequence are subject to the optimal bound in dimension one, i.e. $ \Ds\ll N^{-1}\log N$, if $c_0=1$ in the perturbing sequence $(c_0,c_1,c_2,\ldots)$ and if, e.g., $\alpha$ has bounded continued fraction coefficients, respectively. Apparently, their interplay reveals a more ambivalent behavior. For more details on the individual sequences and further well established examples  and their discrepancy the reader is referred to the monographs \cites{DicDig10,DrmSeq97,KuiUni74}.

From a metric point of view the situation concerning the distribution of perturbed Halton--Kronecker sequences seems to change completely.
\begin{theorem}
 \label{thm:metric}Let $n\in\bN$. There exist real numbers $\lambda_1(n)$ and $\lambda_2(n)$ with
 \begin{equation}
 \label{eqn:lim}
  0\leq1+\log_{2^n}\lambda_1(n)\leq 1+\log_{2^n}\lambda_2(n)\qquad\text{and}\qquad \lim_{n\to\infty}\left(1+\log_{2^n}\lambda_2(n)\right)=0,
 \end{equation}
 such that for almost all $\alpha\in(0,1)$ (in the sense of the Lebesgue measure) and all $\eps>0$ we have
 \begin{equation*}
N\Ds(z_k(n))\ll_{n,\alpha,\eps} N^{1+\log_{2^n}\lambda_2(n)+\eps},
 \end{equation*}
 and 
 \begin{equation*}
N\Ds(z_k(n))\gg N^{1+\log_{2^n}\lambda_1(n)-\eps},
 \end{equation*}
 Furthermore, upper and lower bounds of the exponents in the estimates from above and below, respectively, for small values of $n$ are given in Figure~\ref{fig:exponents}.
\end{theorem}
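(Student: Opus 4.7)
The plan is to combine an Erd\H{o}s--Tur\'an--Koksma inequality in two dimensions with an explicit factorization of the exponential sum tailored to the $n$-periodic perturbation, and then to pass to metric estimates via the ergodicity of the doubling map. Concretely, writing $\e(t)=\exp(2\pi\rmi t)$ and $r(h,j)=\max(1,|h|)\max(1,|j|)$, I would start from
\begin{equation*}
N\Ds(z_k(n))\ls\frac{N}{H}+\sum_{0<\|(h,j)\|_\infty\leq H}\frac{1}{r(h,j)}\left|\sum_{k=0}^{N-1}\e(hx_k(n)+jk\alpha)\right|.
\end{equation*}
Specialising to $N=2^{nM}$ and writing $k=\sum_{i=0}^{M-1}2^{ni}\tilde k_i$ with $\tilde k_i\in\{0,1,\ldots,2^n-1\}$, the $n$-periodicity of $\bfc$ in~\eqref{eqn:c} makes $\e(hx_k(n))$ factor across the $M$ blocks of binary digits (the only coupling, coming from $y_0$, reduces to $\prod_i(-1)^{k_{ni}}$), and $\e(jk\alpha)=\prod_i\e(j\tilde k_i\alpha 2^{ni})$ factors by construction. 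The whole exponential sum therefore equals
\begin{equation*}
\sum_{k=0}^{2^{nM}-1}\e(hx_k(n)+jk\alpha)=\prod_{i=0}^{M-1}P_{h,n}\!\left(\{j\alpha 2^{ni}\}\right),
\end{equation*}
where $P_{h,n}$ is an explicit trigonometric polynomial of length $2^n$ depending only on $h$ and on the perturbing block, evaluated along the orbit of $j\alpha$ under multiplication by $2^n$.

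The two quantities $\lambda_2(n)$ and $\lambda_1(n)$ then arise naturally as (the exponential of) $\sup_h\int_0^1\log|P_{h,n}(t)|\,\rmd t$ and a corresponding smaller quantity controlling typical products along an orbit. For almost every $\alpha$ and any fixed $(h,j)\neq(0,0)$ the orbit $(\{j\alpha 2^{ni}\})_{i\geq 0}$ is equidistributed modulo $1$ by Borel's normal number theorem, and Birkhoff's ergodic theorem applied to $\log|P_{h,n}|$ gives
\begin{equation*}
\frac{1}{M}\sum_{i=0}^{M-1}\log|P_{h,n}(\{j\alpha 2^{ni}\})|\longrightarrow\int_0^1\log|P_{h,n}(t)|\,\rmd t\leq\log\lambda_2(n).
\end{equation*}
Exponentiating, summing over the at most $O(N^2)$ pairs $(h,j)$ in the Erd\H{o}s--Tur\'an--Koksma inequality with cut-off $H\asymp N$, and absorbing the logarithmic and polynomial losses in $N^\eps$ produces $N\Ds(z_k(n))\ls N^{1+\log_{2^n}\lambda_2(n)+\eps}$ for $N=2^{nM}$; the general case follows by a routine dyadic interpolation.

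For the lower bound, following the template of Theorem~\ref{thm:genlower} and the related analysis of evil Kronecker sequences in~\cite{AisOnp15}, I would exhibit an explicit pair $(h_0,j_0)$ and a sparse sequence $M_\ell\to\infty$ along which the orbit $(\{j_0\alpha 2^{ni}\})_{0\leq i<M_\ell}$ returns often enough to near-maxima of $|P_{h_0,n}|$ to give $\left|\prod_{i<M_\ell}P_{h_0,n}(\{j_0\alpha 2^{ni}\})\right|\gs\lambda_1(n)^{M_\ell(1-\eps)}$; the existence of such returns for almost every $\alpha$ is a Borel--Cantelli statement for the Bernoulli shift underlying the map $t\mapsto 2^nt\bmod 1$. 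Converting this lower bound on a specific Fourier coefficient of the counting function into a lower bound on $\Ds$ by the standard Koksma-type inversion then yields $N\Ds(z_k(n))\gs N^{1+\log_{2^n}\lambda_1(n)-\eps}$ along $N=2^{nM_\ell}$.

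The main obstacle is to define $\lambda_1(n)$ and $\lambda_2(n)$ with enough care that the same polynomials $P_{h,n}$ drive both bounds, and to verify the asymptotic $\lim_{n\to\infty}(1+\log_{2^n}\lambda_2(n))=0$ in~\eqref{eqn:lim}; the latter should follow from a direct calculation showing that, as $n\to\infty$, $P_{h,n}(t)$ is dominated by its $(2^n)$-term geometric-series part coming from the identity portion of $C$, whose log-integral is $(n-1+o(1))\log 2$. The unavoidable gap $\lambda_1(n)\leq\lambda_2(n)$ reflects precisely the difference between the integral and pointwise maxima of $\log|P_{h,n}|$ attained along typical orbits, and this is what produces the two different exponents displayed in Figure~\ref{fig:exponents}.
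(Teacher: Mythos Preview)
The factorization you claim for the exponential sum is incorrect, and this breaks the whole argument. Write $k=\sum_{i=0}^{M-1}2^{ni}\tilde k_i$ and recall that
\[
x_k(n)=\frac{y_0}{2}+\sum_{\ell\geq1}\frac{k_\ell}{2^{\ell+1}},\qquad y_0\equiv\sum_{\lambda\geq0}k_{n\lambda}\pmod 2.
\]
The $y_0$-part indeed contributes a factor $\prod_i(-1)^{h k_{ni}}$ (for $h$ odd), but the van~der~Corput part $\sum_{\ell\geq1}k_\ell 2^{-\ell-1}$ contributes to block $i$ the phase $\sum_{\nu}h\,k_{ni+\nu}2^{-(ni+\nu+1)}$, whose dependence on $i$ through $h/2^{ni}$ does \emph{not} disappear. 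Hence the block factors are polynomials $Q_{h,n,i}(t)$ that genuinely depend on $i$; there is no single $P_{h,n}$ such that $\sum_k\e(hx_k(n)+jk\alpha)=\prod_i P_{h,n}(\{j\alpha 2^{ni}\})$. (For $h=0$ your identity is fine, but then the Halton component plays no role.) This is precisely why the paper does \emph{not} apply a two-dimensional Erd\H os--Tur\'an--Koksma inequality: instead it decomposes the Halton coordinate into dyadic intervals (Lemma~\ref{lem:genupper_unsimplified}), which turns the counting condition into a congruence on $k$ plus a parity of the weighted digit sum $s_{\bfc^{(\ell)}}$. Only after this reduction does one obtain a sum $\sum_k\e(2^\ell hk\alpha+s_{\bfc^{(\ell)}}(k)/2)$ whose summand is honestly $2$-additive (Lemma~\ref{lem:2additive}), producing the lacunary product $\Pi_{r,\bfc^{(\ell)}}(2^\ell h\alpha)$.

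Even if a block-independent factorization did hold, your passage from Birkhoff's theorem to the discrepancy bound has a uniformity gap. For a.e.\ $\alpha$ and each \emph{fixed} $(h,j)$ the ergodic average of $\log|P_{h,n}|$ along the $\times 2^n$-orbit of $j\alpha$ converges, but the Erd\H os--Tur\'an--Koksma sum runs over $(h,j)$ with $\|(h,j)\|_\infty\leq H\asymp N$, and there is no uniform rate in Birkhoff that lets you absorb $O(N^2)$ terms into $N^\eps$. The paper circumvents this entirely: it controls the \emph{integral} $\int_0^1\Pi_{nL,\bfc}(\alpha)\,\rmd\alpha$ via the transfer-operator recursion of Proposition~\ref{prop:trigprodmetric}, and then a Chebyshev/Borel--Cantelli argument turns the $L^1$ bound into an a.s.\ bound that is automatically uniform over the required ranges of $h$ and $\ell$. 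In particular the paper's $\lambda_1(n),\lambda_2(n)$ are defined through these $L^1$ recursions (limits of $m_{n,j},M_{n,j}$), not as $\exp\!\big(\int\log|P_{h,n}|\big)$; your log-integral quantity is the almost-sure Lyapunov exponent, which is in general strictly smaller and would not reproduce the values in Figure~\ref{fig:exponents}. For the lower bound the paper likewise avoids pointwise recurrence arguments: it restricts to the box $[0,1/2)\times[0,\gamma)$, reduces to the subsequence $(\{m_k\alpha\})$, and feeds the $L^1$ lower bound on $\big|\sum_k\e(m_k\alpha)\big|$ from Proposition~\ref{prop:trigprodmetric} into the Aistleitner--Larcher criterion (Lemma~\ref{lem:AL}).
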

\begin{figure}[!htbp]
 \begin{equation*}
 \begin{array}{c|ccccc}
n & 1 & 2 & 3 & 4 & 5 \\
\hline
1+\log_{2^n}\lambda_1(n)	&0.40337	&0.37489	&0.34961	&0.32651	&0.30450\\
1+\log_{2^n}\lambda_2(n) &0.40348	&0.37516	& 0.34962	&0.32672	&0.30599\\
 \end{array}
\end{equation*}
\caption{Approximations of the exponents from Theorem~\ref{thm:metric}.}
\label{fig:exponents}
\end{figure}

\begin{remark}\label{rem:2}
Numerical experiments lead us to the conjecture that the exponents are decreasing in $n$. Moreover, in the limit case $n=\infty$  we know from \cite{LarPro13} that for almost all $\alpha$, every $\eps>0$ and $\delta >0$  
$$1\ll N\Ds(z_k(\infty))\ll_{\alpha,\delta} \left(\log N \right)^{2 +\delta}\ll_\eps N^\eps,$$
in accordance to the behavior of $\lambda_2(n)$. I.e., in the case where the density of $1$'s is extremely sparse, (\ref{eqn:lim}) implicitly shows the optimality of the exponents.
\end{remark}

The above theorems strongly rely on estimates of \emph{lacunary trigonometric products} of the form
\begin{equation}
 \label{eqn:Pi}
  \Pi_{r,\boldsymbol{\gamma}}(\alpha)=\prod_{j=0}^{r-1}\left|\cos\left(2^j\alpha\pi+\frac{\gamma_j\pi}{2}  \right) \right|,
 \end{equation}
 where $\boldsymbol{\gamma}=(\gamma_0,\gamma_1,\gamma_2,\ldots)\in\{0,1\}^{\bN_0}$, $\alpha\in(0,1)$ and $r\in\bN$. Here, the term \emph{lacunary} refers  to the exponential growth of the argument of the cosine function.  Since these are interesting subjects in their own right, we present them in the separate Section~\ref{sec:trigprod}. As a matter of fact,  the quantities $\lambda_1(n)$ and $\lambda_2(n)$ occurring in Theorem~\ref{thm:metric}  stem from  the following metric result.
 \begin{proposition}
\label{prop:trigprodmetric}
Let $n\in\bN$. We have 
\begin{equation}
  \label{eqn:trigmetric1}
\int_{0}^{1}\Pi_{nL,\bfc}(\alpha)\rmd\alpha \leq \big(\mu(n)\big)^{L}
\end{equation}
for every $L\in\bN$ with 
$$\mu(n)=\frac{1}{4^n}\sum_{k=0}^{2^n-1}\left|\cos\left(\frac{(1+2k)\pi}{2^{n+1}}\right)\right|^{-1}.$$ 
Furthermore, there are positive real numbers $\lambda_1(n)$ and $\lambda_2(n)$ such that for every $\eps>0$
\begin{equation}
 \label{eqn:trigmetric2}
 \left(2^{L}\right)^{\log_2\lambda_{1}(n)-\eps}\leq  \int_{0}^{1}\Pi_{nL,\bfc}(\alpha)\rmd\alpha \leq \left(2^{L}\right)^{\log_2\lambda_{2}(n)+\eps}
\end{equation}
for $L>L_0(n,\eps)$.
\end{proposition}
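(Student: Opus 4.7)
The plan is to treat both claims by a transfer-operator argument that reduces integration of the full lacunary product to an iterated one-step bound. First, I would use the telescoping identity $\prod_{i=0}^{n-1}\cos(2^i x) = \sin(2^n x)/(2^n\sin x)$ to collapse each block of $n$ consecutive factors in $\Pi_{nL,\bfc}(\alpha)$, arriving at
$$
\Pi_{nL,\bfc}(\alpha) = \prod_{\ell=0}^{L-1} g(T_n^\ell \alpha), \qquad g(\alpha) := \frac{|\sin(2^n\pi\alpha)|}{2^n|\cos(\pi\alpha)|},
$$
where $T_n\alpha = 2^n\alpha\bmod 1$ is the $2^n$-shift on $[0,1]$. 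Let $P$ denote its Perron--Frobenius operator $(Pf)(\beta) = 2^{-n}\sum_{k=0}^{2^n-1} f((k+\beta)/2^n)$. The standard duality $\int_0^1 g(\alpha)(H\circ T_n)(\alpha)\,\rmd\alpha = \int_0^1 H(\beta)(Pg)(\beta)\,\rmd\beta$, applied with $H=\Pi_{n(L-1),\bfc}$, yields the one-step reduction $I_L \leq \|Pg\|_\infty\cdot I_{L-1}$ with $I_L := \int_0^1\Pi_{nL,\bfc}\,\rmd\alpha$. Iterating gives $I_L\leq\|Pg\|_\infty^L$, so (\ref{eqn:trigmetric1}) follows once the identification $\|Pg\|_\infty = \mu(n)$ is established.

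Using $\sin(2^n\pi(k+\beta)/2^n) = (-1)^k\sin(\pi\beta)$ one computes
$$
(Pg)(\beta) = \frac{|\sin(\pi\beta)|}{4^n}\sum_{k=0}^{2^n-1}\frac{1}{|\cos(\pi(k+\beta)/2^n)|},
$$
so that $(Pg)(1/2) = \mu(n)$ is immediate. The main technical obstacle is proving that $\beta = 1/2$ realizes the global maximum on $[0,1]$. My strategy is to pair $k$ with $2^n-1-k$ and introduce $\phi_k = \pi(2k+1)/2^{n+1}$, $\eta = \pi(1-2\beta)/2^{n+1}$, under which $|\sin(\pi\beta)| = \cos(2^n\eta)$ and the task reduces to the symmetric inequality
$$
\cos(2^n\eta)\cos\eta\sum_{k=0}^{2^{n-1}-1}\frac{\cos\phi_k}{\cos^2\phi_k - \sin^2\eta} \leq \sum_{k=0}^{2^{n-1}-1}\frac{1}{\cos\phi_k}, \qquad |\eta|\leq\frac{\pi}{2^{n+1}}.
$$
I expect to verify this either by combining the identity $1 - \cos\eta\cos(2^n\eta) = \sin^2((2^n-1)\eta/2) + \sin^2((2^n+1)\eta/2)$ with a careful comparison, or by showing directly that $\beta = 1/2$ is the unique critical point of $(Pg)$ in $(0,1)$ (the evenness $(Pg)(\beta)=(Pg)(1-\beta)$ already forces $(Pg)'(1/2) = 0$).

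For (\ref{eqn:trigmetric2}) I set $\lambda_2(n) := \mu(n)$, so the upper bound is immediate from (\ref{eqn:trigmetric1}). For the lower bound, Jensen's inequality applied to $\Pi_{nL,\bfc}$ yields
$$
I_L \geq \exp\bigl(L\textstyle\int_0^1\log g\,\rmd\alpha\bigr) = 2^{-nL}
$$
(with $\int_0^1\log g\,\rmd\alpha = -n\log 2$ evaluated via the classical values $\int_0^1\log|\sin(\pi\alpha)|\,\rmd\alpha = \int_0^1\log|\cos(2^i\pi\alpha)|\,\rmd\alpha = -\log 2$), so $\lambda_1(n) := 2^{-n}$ satisfies the claimed lower bound and realizes $1+\log_{2^n}\lambda_1(n)=0$. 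One can sharpen this via the Perron--Frobenius/Krein--Rutman theorem applied to the positive transfer operator $\mathcal{T}f := P(gf)$, for which $I_L = \int_0^1 \mathcal{T}^L\mathbf{1}\,\rmd\beta$: its dominant eigenvalue $\rho(\mathcal{T})$ is strictly positive and satisfies $\lim_L I_L^{1/L}=\rho(\mathcal{T})$, which should match the numerical values in Figure~\ref{fig:exponents}. Finally, $\lim_{n\to\infty}(1+\log_{2^n}\lambda_2(n)) = 0$ follows from the asymptotic $\mu(n) \sim 2n\log 2/(\pi\cdot 2^n)$, obtained by the symmetry $k\mapsto 2^n-1-k$ in the defining sum of $\mu(n)$ and the approximation $\csc x\approx 1/x$ of the dominant terms, which identifies the leading behavior with a partial sum of the odd harmonic series.
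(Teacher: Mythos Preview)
Your transfer-operator framework is the paper's own in different language: your $Pg$ is exactly the paper's $\Phi_{n,1}$, the duality step is the recursion~(\ref{eqn:integralrecursion}), and your iterates $\mathcal{T}^j\mathbf{1}$ would reproduce the paper's $\Phi_{n,j}$.

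The genuine gap is the step you flag yourself: that $\beta=1/2$ is the global maximum of $Pg$. Neither of your two suggested routes is carried out, and neither is clearly viable --- the identity $1-\cos\eta\cos(2^n\eta)=\sin^2\!\big((2^n{-}1)\eta/2\big)+\sin^2\!\big((2^n{+}1)\eta/2\big)$ does not make the paired inequality hold termwise (for $k$ near $2^{n-1}-1$ the individual numerators are negative), and uniqueness of the critical point is no easier to establish directly than the maximum itself. The paper settles this by proving the stronger fact that $\Phi_{n,1}=Pg$ is \emph{concave}: it expands $\sin(2^nx)/\cos x=2\sum_{l=1}^{2^{n-1}}(-1)^l\sin((2l{-}1)x)$, sums the resulting terms via a closed-form identity of Fouvry--Mauduit for alternating cosine sums, and obtains
\[
4^n\,(Pg)(\beta)=2\sum_{l=1}^{2^{n-1}}\frac{\cos\!\big((2l{-}1)\pi(\beta-\tfrac12)/2^n\big)}{\cos\!\big((2l{-}1)\pi/2^{n+1}\big)},
\]
a sum of concave cosines on $[0,1]$. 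Concavity plus the symmetry you noted then forces the maximum at $1/2$. This is the technical heart of the argument and your proposal does not close it.

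For~(\ref{eqn:trigmetric2}) your Jensen bound is correct and does give a valid $\lambda_1(n)=2^{-n}$, but then $1+\log_{2^n}\lambda_1(n)=0$, so the metric lower bound in Theorem~\ref{thm:metric} becomes vacuous and the values in Figure~\ref{fig:exponents} are not recovered. The paper instead sets $\lambda_1(n)=\lim_j m_{n,j}$ and $\lambda_2(n)=\lim_j M_{n,j}$, where $m_{n,j}$ and $M_{n,j}$ are the minimum and maximum of the ratios $\Phi_{n,j+1}/\Phi_{n,j}$; an elementary averaging argument shows $(M_{n,j})$ decreases and $(m_{n,j})$ increases, yielding the non-trivial exponents in the table. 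Your closing Krein--Rutman remark points to the same spectral limit $\rho(\mathcal{T})$ but is not developed; the paper's monotone-ratio construction is the concrete substitute.
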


The structure of the remaining paper is as follows: In Section~\ref{sec:genupper} we derive a more generic version of the upper bound for the star discrepancy of the sequence $(z_k(n))_{k\geq 0}$, as the one stated in Theorem~\ref{thm:genupper}, which draws the aforementioned connection to the diophantine approximation type of $\alpha$ (i.e., the number $\sigma$) and to the product \eqref{eqn:Pi}, respectively. Moreover, we provide some lower discrepancy bounds for the sequence  $(\{m_k \alpha\})_{k\geq 0}$ and include further auxiliary results which are relevant for the final proofs of our theorems. Section~\ref{sec:trigprod} provides general  bounds for the lacunary product (\ref{eqn:Pi}) with $\boldsymbol{\gamma}=\bfc$ as well as a proof of Proposition~\ref{prop:trigprodmetric}. In a similar fashion, these already appeared in \cites{AisOnp15,FouMet96,FouSom96}. Finally, we give the proofs of our main theorems in Section~\ref{sec:proofs}.

\begin{remark}
 \label{rem:hybrid}
In principle, hybrid sequences are built by juxtaposing pure sequences to higher dimensional sequences and are the subject of various recent papers \cites{HeKr12,HofJNT,HK11,hklp,HofOne10,Kri12,KrPi13}. Prior to these, hybrid sequences that are built by combining low-discrepancy sequences and (pseudo)-random sequences were suggested by Spanier \cite{spanier} to overcome the curse of dimensionality in quasi-Monte Carlo methods. For results on such hybrid sequences see for example \cites{Niederreiter10c,Niederreiter11b,NiederreiterWinterhof11}. 
\end{remark}
\begin{remark}
A famous and well studied combination of two types of pure low-discrepancy sequences are the Halton--Kronecker sequences (see, for instance, \cites{drmlarHK,HofMet12,LarPro13,NieOnt09,NieImp12}). 
 Combinations of different low-discrepancy sequences are interesting objects as they are candidates for new classes of low-discrepancy sequences and since they often raise intriguing number theoretical problems. The study of Halton--Kronecker sequences, for example, gives rise to the question for a $p$-adic analog of the Thue--Siegel--Roth theorem which was established by Ridout \cite{ridout} and, for instance, to the need of real numbers $\alpha$ that have bounded continued fraction coefficients on the one hand, and an explicitly specifiable binary representation on the other (examples of such numbers were discovered by Shallit \cites{ShaSim79}). Contrary to the Halton--Kronecker sequences, Niederreiter--Kronecker sequences appear to be objects which are particularly hard to study. Qualitative results on their discrepancy can be found in \cite{HK11}. The results obtained in this paper reveal quantitative information of such sequences. 
\end{remark}

\section{General upper and lower discrepancy bounds for perturbed Halton--Kronecker sequences and further auxiliary results}
\label{sec:genupper}

Let us denote by $\|t\|$, $t\in\bR$, the distance of $t$ to the nearest integer, i.e. $\|t\|:=\min\{\{t\},1-\{t\}\}$. Furthermore, we abbreviate $\e(t):=e^{2\pi\rmi t}$.

We begin this section with one of the core estimates for the star discrepancy of $(z_k(n))_{k\geq 0}$ which essentially separates the influence of the sequence $\bfc$ from diophantine properties of $\alpha$ via the product \eqref{eqn:Pi} and a term containing expressions of the form $\|2^\ell h\alpha\|$. Higher dimensional analogues  over $\bZ/p\bZ$ with $p$ prime of the proposition below are known to the authors and are only more technical to derive. But as we do not want to divert the reader's attention from the core issues, we do not state this result in its full generality.

\begin{proposition}
 \label{prop:genupper}Let $n\in\bN$. 
For every irrational $\alpha\in(0,1)$ and for $\bfc$ as given in (\ref{eqn:c}) the star discrepancy of the first $N\geq2$ elements of $(z_k(n))_{k\geq 0}$  satisfies

 \begin{multline}
 \label{eqn:genupper}
  N\Ds(z_k(n)) \ll \frac{N}{K}+\frac{N}{H}\log N + \log^2 N+ 
  \\
  +\sum_{\ell=1}^{\lfloor \log_2 K\rfloor} \sum_{h=1}^{\lfloor H/2^{\ell}\rfloor}\frac{1}{h} 
  \Bigg[\frac{1}{\|2^{\ell}h\alpha  \|}
  +
   \sum_{r=0}^{\lfloor \log_2N\rfloor-\ell}2^r\Pi_{r,\bfc^{(\ell)}}(2^{\ell}h\alpha)\Bigg],
 \end{multline}
 for all positive integers $H,K\leq N$,  where $\bfc^{(\ell)}$ denotes the shifted sequence $(c_{\ell},c_{\ell+1},\ldots)$ and where $\Pi_{r,\bfc^{(\ell)}}$ is defined in \eqref{eqn:Pi}.
\end{proposition}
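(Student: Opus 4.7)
The plan is to apply a two-dimensional Erd\H os--Tur\'an--Koksma inequality, reducing the estimation of $N\Ds(z_k(n))$ to the control of exponential sums $S_N(h_1,h_2)=\sum_{k=0}^{N-1}\e(h_1 x_k(n)+h_2 k\alpha)$, and then to exploit the digital structure of $(x_k(n))$ so as to factor these sums into products of cosines indexed by the binary digits of $k$. Grouping those cosines by dyadic scale will produce, on one hand, a Kronecker-like residual bounded by $1/\|2^\ell h\alpha\|$ via Koksma's estimate, and on the other hand exactly the lacunary trigonometric product $\Pi_{r,\bfc^{(\ell)}}(2^\ell h\alpha)$ from \eqref{eqn:Pi}.

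First I would invoke Erd\H os--Tur\'an--Koksma to obtain
\[
 N\Ds\ll \frac{N}{H+1}+\sum_{0<\|(h_1,h_2)\|_\infty\leq H}\frac{|S_N(h_1,h_2)|}{r(h_1)r(h_2)},\qquad r(h):=\max(1,|h|).
\]
The degenerate contribution at $h_1=0$ is a pure Kronecker sum, bounded by $1/\|h_2\alpha\|$, which feeds into the $\ell=0$ part of the main estimate and is the source of the $(N/H)\log N$ term after the weights $1/h_2$ are summed. The degenerate contribution at $h_2=0$ is a pure perturbed-Halton sum, which via standard one-dimensional digital-sequence bounds accounts for the $\log^{2}N$ term. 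The mixed range $h_1h_2\neq 0$ is the substantive contribution that must be analysed.

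For that range, the key observation is that for $N=2^M$ the expansion $k=\sum_{j}k_j 2^j$ combined with $y_0(k)=\sum_i c_i k_i\bmod 2$ and the identity $\e(h_1 y_0(k)/2)=\prod_i \e(h_1 c_i k_i/2)$ (which is valid because $y_0\in\{0,1\}$) allows a complete factorisation of $S_N$ over the binary digits of $k$. Summing over $\{0,1\}^M$ then yields
\[
 |S_{2^M}(h_1,h_2)|=2^M\prod_{j=0}^{M-1}|\cos\pi\vartheta_j|,\qquad \vartheta_j=\frac{h_1 c_j}{2}+\frac{h_1\,[j\geq 1]}{2^{j+1}}+h_2\, 2^j\alpha.
\]
For an arbitrary $N\leq 2^M$ a van-der-Corput-type dyadic decomposition of $\{0,\ldots,N-1\}$ into power-of-two blocks handles the incomplete case at the cost of an additional $\log N$. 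The next step is to split the product into a top block of $r$ consecutive cosines and a bottom tail: estimating the top block \emph{exactly} --- after the reparametrisation $h_2=2^\ell h$ that absorbs the scaling so that $h_2\, 2^{j+\ell}\alpha = 2^j(2^\ell h\alpha)$ --- reveals precisely $\Pi_{r,\bfc^{(\ell)}}(2^\ell h\alpha)$, while estimating the bottom tail trivially against $|\cos|\leq 1$ and invoking Koksma's bound on the remaining Kronecker-like factor yields the $1/\|2^\ell h\alpha\|$ contribution. Partial summation in $h_2$ converts the Erd\H os--Tur\'an weight $1/r(h_2)$ into the weight $1/h$ of \eqref{eqn:genupper}, the freedom to choose the cut-off $r$ produces the inner sum $\sum_r 2^r\Pi_{r,\bfc^{(\ell)}}$, and truncating the outer sum at dyadic scale $\ell\leq \log_2 K$ leaves the residual $N/K$.

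The main obstacle is the careful bookkeeping required to align the perturbation shifts $c_j\pi/2$ --- which toggle between $|\cos|$ and $|\sin|$ in individual factors of $\Pi$ --- with the dyadic reindexing $h_2=2^\ell h$ that produces the shift in $\bfc^{(\ell)}$, so that the block of cosines at digit levels $j\geq\ell$ recombines \emph{exactly} into $\Pi_{r,\bfc^{(\ell)}}(2^\ell h\alpha)$ and not into some perturbed variant. One must also track the parity of $h_1$ (the factor coming from $y_0$ vanishes identically for $h_1$ even but is genuinely non-trivial for $h_1$ odd), and ensure that the van-der-Corput reduction from general $N$ to $N=2^M$ does not introduce errors exceeding the $(N/H)\log N$ and $\log^{2}N$ terms already allotted in the statement.
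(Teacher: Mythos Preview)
Your route via the two–dimensional Erd\H{o}s--Tur\'an--Koksma inequality is genuinely different from the paper's, and as sketched it has a gap. The paper does \emph{not} Fourier-analyse the Halton coordinate at all: it decomposes the interval $[0,\beta)$ into dyadic pieces $J_\beta(\ell)$ for $1\le\ell\le\lfloor\log_2K\rfloor$ (this is the origin of the parameter $K$ and the residual $N/K$), observes that $x_k(n)\in J_\beta(\ell)$ forces $k$ into a residue class mod $2^{\ell-1}$ together with a parity constraint on $s_{\bfc^{(\ell)}}(\lfloor k/2^{\ell-1}\rfloor)$, and then applies the \emph{one}-dimensional Erd\H{o}s--Tur\'an inequality with cut-off $\lfloor H/2^\ell\rfloor$ to the Kronecker-type subsequence this carves out. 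The exponential sums that arise are precisely $\sum_{k<N/2^\ell}\e(2^\ell kh\alpha+s_{\bfc^{(\ell)}}(k)/2)$, with no extraneous terms, and a $2$-additive dyadic splitting of the range of $k$ produces $\sum_r 2^r\Pi_{r,\bfc^{(\ell)}}$ directly. Thus the shift $\bfc^{(\ell)}$, the range $1\le h\le H/2^\ell$, and the index $\ell$ itself all come from the \emph{spatial} dyadic level of the Halton coordinate, not from any reparametrisation of a Fourier frequency.

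In your factorisation the term $h_1[j\ge1]/2^{j+1}$ in $\vartheta_j$ is the obstacle. Taking the top block $j=\ell,\dots,\ell+r-1$ and reindexing by $j'=j-\ell$ gives (for $h_1$ odd) the factors $|\cos\pi(c_{j'+\ell}/2 + h_1/2^{j'+\ell+1} + 2^{j'}\cdot 2^\ell h_2\alpha)|$; the contamination $h_1/2^{j'+\ell+1}$ is neither an integer nor negligible for $j'+\ell\lesssim\log_2|h_1|$, so the product is \emph{not} $\Pi_{r,\bfc^{(\ell)}}(2^\ell h_2\alpha)$, and your sketch offers no mechanism to remove it. Your reparametrisation ``$h_2=2^\ell h$'' is also inconsistent: with that substitution $h_2\,2^{j+\ell}\alpha=2^{j+2\ell}h\alpha$, not $2^j(2^\ell h\alpha)$; and in any case scaling $h_2$ does nothing to shift the weight sequence $\bfc$. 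Finally, the sum $\sum_r 2^r\Pi_r$ does not come from ``freedom to choose the cut-off $r$'' --- in the paper it is forced by the dyadic block decomposition of $\{0,\dots,\lfloor N/2^\ell\rfloor-1\}$. A two-dimensional ETK argument might be salvageable, but it would need a separate device to handle the Halton-frequency terms $h_1/2^{j+1}$, and would not reproduce \eqref{eqn:genupper} in the form stated.
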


\begin{proof}
This will immediately follow from Lemma~\ref{lem:genupper_unsimplified} and Lemma~\ref{lem:2additive} below.
\end{proof}

In what follows we denote by $s_{\bfc^{(j)}}(k)$ the weighted sum of digits of $k=k_0+2k_1+2k_2+\cdots$ in base $2$ with weight sequence $\bfc$ shifted by $j\geq1$. I.e.,
$$
s_{\bfc^{(j)}}(k)=k_0c_j+k_1c_{j+1}+k_2c_{j+2}+\cdots.
$$
Notice that this is in fact a finite sum as the dyadic expansion of every integer $k$ is finite.
\begin{lemma}
 \label{lem:genupper_unsimplified}
 Under the assumptions of Proposition~\ref{prop:genupper} we have
 \begin{multline}
 \label{eqn:genupper_unsimplified}
  N\Ds(z_k(n)) \ll \frac{N}{K}+\frac{N}{H}\log N + \log^2N \\ 
  +\sum_{\ell=1}^{\lfloor \log_2 K\rfloor} \sum_{h=1}^{\lfloor H/2^{\ell}\rfloor}\frac{1}{h} \Bigg[\frac{1}{\|2^{\ell}h\alpha \|}
  +\Bigg| \sum_{k=0 }^{\lfloor N/2^{\ell}\rfloor-1} \e\left(2^{\ell} h\alpha k +\frac{  s_{\bfc^{(\ell)}}(k)  }{2}\right) \Bigg| \Bigg].
 \end{multline}
 
 \end{lemma}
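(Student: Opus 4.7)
The plan is to decompose $[0,x_1)$ dyadically in the first coordinate, which reduces the 2D counting problem to a one-dimensional Kronecker-type problem at each dyadic level, where the Erd\H{o}s--Tur\'an inequality can then be applied to the second coordinate.

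For $\boldsymbol{x} = (x_1, x_2) \in (0,1]^2$, expanding $x_1 = \sum_{\ell \geq 1} b_\ell 2^{-\ell}$ in binary and truncating at level $L := \lfloor \log_2 K\rfloor$ yields $[0, x_1) = \bigsqcup_{\ell \leq L,\, b_\ell = 1} I_\ell \sqcup R$ with dyadic $I_\ell$ of length $2^{-\ell}$ and $|R| < 1/K$. Since $c_0 = 1$ makes $(x_k(n))_{k \geq 0}$ a digital $(0,1)$-sequence in base $2$, its one-dimensional star discrepancy is $\ll \log N / N$, so the residual $R \times [0, x_2)$ contributes at most $\ll N/K + \log N$ to the counting error. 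For a dyadic interval $I_\ell = [a/2^\ell, (a+1)/2^\ell)$, the condition $x_k(n) \in I_\ell$ prescribes the first $\ell$ dyadic digits of $x_k(n)$; via the matrix $C$ this fixes the digits $k_1, \ldots, k_{\ell-1}$ to specific values depending on $a$, and (using $c_0 = 1$ to solve for $k_0$) enforces
\[
k_0 \equiv A(a) + s_{\bfc^{(\ell)}}(m) \pmod 2, \qquad m := \lfloor k/2^\ell\rfloor,
\]
for some constant $A(a) \in \{0, 1\}$ depending only on $a$. Thus the $k \in [0, N)$ with $x_k(n) \in I_\ell$ are parametrized by $m \in [0, M_\ell)$, with $M_\ell = \lfloor N/2^\ell\rfloor + O(1)$, via $k = k(m) := 2^\ell m + \tilde a + k_0(a, m)$ for some $\tilde a$ depending only on $a$.

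Setting $A_\ell := \#\{m < M_\ell : \{k(m)\alpha\} \in [0, x_2)\}$ and applying the one-dimensional Erd\H{o}s--Tur\'an inequality with parameter $H_\ell := \lfloor H/2^\ell\rfloor$ yields
\[
|A_\ell - M_\ell x_2| \ll \frac{M_\ell}{H_\ell} + \sum_{h=1}^{H_\ell} \frac{1}{h} |T_\ell(h)|, \qquad T_\ell(h) := \sum_{m=0}^{M_\ell - 1} \e(h\, k(m) \alpha).
\]
Decomposing $k(m)\alpha = 2^\ell m \alpha + \tilde a \alpha + k_0(a, m)\alpha$ and using the identity
\[
\e(h k_0 \alpha) = \frac{1+\e(h\alpha)}{2} + (-1)^{k_0}\, \frac{1-\e(h\alpha)}{2}
\]
together with $(-1)^{k_0(a,m)} = (-1)^{A(a)} \e(s_{\bfc^{(\ell)}}(m)/2)$, the triangle inequality gives
\[
|T_\ell(h)| \leq \Big|\sum_{m=0}^{M_\ell-1} \e(h \cdot 2^\ell m \alpha)\Big| + \Big|\sum_{m=0}^{M_\ell - 1} \e\big(h \cdot 2^\ell m \alpha + s_{\bfc^{(\ell)}}(m)/2\big)\Big|,
\]
and the first (geometric) sum is bounded by $1/\|h \cdot 2^\ell \alpha\|$.

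Summing over $\ell \in \{1, \ldots, L\}$ yields the $(N/H)\log N$ term from $\sum_\ell M_\ell/H_\ell \ll L \cdot N/H$, and the boundary slack $|M_\ell - N/2^\ell| \cdot x_2 = O(1)$ sums to $O(\log N)$. Extending the upper limits of both exponential sums from $M_\ell$ to $\lfloor N/2^\ell\rfloor$ introduces $O(1)$ error per pair $(\ell, h)$, which after weighting by $1/h$ and summing over $h \leq H_\ell$ and $\ell \leq L$ contributes exactly the $O(\log^2 N)$ term appearing in the statement. Adding the residual contribution from Step~1 assembles the claimed bound. The main obstacle will be the careful bookkeeping around the boundary $k < N$: controlling the $O(1)$ slack in $M_\ell$ and isolating the $\tilde a \alpha$ and $k_0(a,m) \alpha$ phases of $T_\ell(h)$ so that everything aggregates cleanly into the stated error terms rather than something larger.
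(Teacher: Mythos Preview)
Your proof is correct and follows essentially the same strategy as the paper: dyadic decomposition of $[0,x_1)$, reduction of each level to a one-dimensional Kronecker-type problem, Erd\H{o}s--Tur\'an in the second coordinate, and a two-term split of the resulting exponential sum into a geometric piece and the $s_{\bfc^{(\ell)}}$-weighted piece. The only cosmetic difference is that the paper first writes $k=2\sigma+\rho$ and detects the parity condition via the character indicator $\Sigma_{\ell,a}(v)=\tfrac12\sum_{z\in\{0,1\}}\e\!\big(\tfrac{z}{2}(s_{\bfc^{(\ell)}}(v)-a)\big)$, whereas you parametrize directly by $m=\lfloor k/2^\ell\rfloor$ and use the identity $\e(hk_0\alpha)=\tfrac{1+\e(h\alpha)}{2}+(-1)^{k_0}\tfrac{1-\e(h\alpha)}{2}$; both are the same Fourier expansion on $\bZ/2\bZ$ and lead to identical sums.
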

 \begin{proof}
 Consider an arbitrary but fixed anchored rectangle $J=[0,\beta)\times[0,\gamma)$ with $\beta\neq 1$ in the unit square. Furthermore, we consider the dyadic expansion of $\beta$
 \begin{equation*}
  \beta=2^{-1}\beta_1+2^{-2}\beta_2+\cdots
 \end{equation*}
with $\beta_i\neq1$ infinitely often. Choose $K\leq N$ and abbreviate $\kappa=\lfloor \log_{2}K \rfloor$. On the basis of this we set $\Sigma_k(\beta)=\sum_{j=1}^{k}\beta_j2^{-j}$ and define the intervals $\cB$ and $J_{\beta}(\ell)$, $1\leq\ell\leq \kappa$, for $\beta_{\ell}=1$ by
 \begin{gather*}
  J_{\beta}(\ell):=\left[\Sigma_{\ell-1}(\beta),  \Sigma_\ell(\beta)\right),  \\
  \cB := \left[\Sigma_{\kappa}(\beta),\Sigma_{\kappa}(\beta)+2^{-\kappa}\right).
 \end{gather*}
In this notation we easily obtain
\begin{multline}
\label{eqn:discrwmax}
 |\cA_N(J)-N\lambda(J)|\leq \sum_{\substack{\ell=1\\ \beta_{\ell}=1}}^{\kappa}\left| \cA_{N}(J_{\beta}(\ell)\times[0,\gamma))-N \lambda(J_{\beta}(\ell)\times[0,\gamma)) \right| \\
 +\max\left\{ \cA_{N}(\cB\times[0,1)),N\lambda(\cB\times[0,1))\right\}.
\end{multline}
Note that $\cB$ is a dyadic interval with volume $1/2^\kappa$, hence, since $C$ is non-singular, we have
\begin{equation*}
 \cA_{N}(\cB\times[0,1))\leq \frac{N}{2^{\kappa}}+1=N \lambda(\cB\times[0,1))+1.
\end{equation*}
Consequently,
\begin{equation}
\label{eqn:max}
\max\left\{ \cA_{N}(\cB\times[0,1)),N\lambda(\cB\times[0,1))\right\}
\leq 
\frac{N}{2^{\kappa}}+1\ll \frac{N}{K}.
\end{equation}

To study the first sum on the right-hand side of (\ref{eqn:discrwmax}), consider a fixed $\ell\leq\kappa$ such that $\beta_{\ell=1}$. Let $\sigma_0+2\sigma_1+\cdots$ be the dyadic expansion of a non-negative integer $\sigma$. By the construction of our sequence it is easy to see that $x_{2\sigma+\rho}\in J_{\beta}(\ell)$, $\rho\in\{0,1\}$, iff 
\begin{equation*}
\begin{cases}
 s_{\bfc^{(1)}}(\sigma)=\sigma_0c_1 + \sigma_1 c_2+\cdots \equiv \beta_1-\rho\pmod{2},  \\
 \sigma_i=\beta_{i+2}\quad \text{for all }~0\leq i \leq \ell-3,~\text{ and}\\
 \sigma_{\ell-2}=0,
 \end{cases}
\end{equation*}
while the digits $\sigma_{\ell-1},\sigma_{\ell},\sigma_{\ell+1},\ldots$ remain arbitrary.

 The above set of conditions is equivalent to
\begin{equation*}
 s_{\bfc^{(1)}}(\sigma)\equiv\beta_1-\rho\pmod{2},\qquad \sigma\equiv R_{\beta,\ell}\pmod{2^{\ell-1}},
\end{equation*}
where $0\leq R_{\beta,\ell}<2^{\ell -1}$ denotes a certain integer. This, in turn, holds if and only if
\begin{equation*}
 \sigma\equiv R_{\beta,\ell} \pmod{2^{\ell-1}},\qquad s_{\bfc^{(\ell)}}\left(\lfloor \sigma/2^{\ell-1}\rfloor \right) \equiv \beta_1-\rho-s_{\bfc^{(1)}}(R_{\beta,\ell})\pmod{2}.
\end{equation*}
It is evident that for any integer $v$ we have $s_{\bfc^{(\ell)}}(v)\equiv a\bmod{2}$ iff
\begin{equation}
\label{eqn:sigmaella}
\Sigma_{\ell,a}(v):=\frac{1}{2}\sum_{z\in\{0,1\}}\e\left( \frac{z}{2}\left( s_{\bfc^{(\ell)}}(v)-a\right) \right)=1,
\end{equation}
and  $\Sigma_{\ell,a}(v)=0$ otherwise. Therefore, we may rewrite the above as
\begin{equation}
\label{eqn:conditions}
 x_{2\sigma+\rho}\in J_{\beta}(\ell)\quad\Longleftrightarrow\quad
 \begin{cases}
  \sigma \equiv R_{\beta,\ell} \pmod{2^{\ell-1}},\text{ and}\\
  \Sigma_{\ell,\beta_1-\rho-s_{\bfc^{(1)}}(R_{\beta,\ell})}\left(\lfloor \sigma/2^{\ell-1} \rfloor \right)=1.
 \end{cases}
\end{equation}
For $\ell$ and $\rho$ as above we introduce the increasing sequence $(\sigma_{k}^{(\ell,\rho)})_{k\geq0}$ composed of all the integers solving  (\ref{eqn:conditions}). Since infinitely many elements of the sequence $\bfc$ are different from 0, this is an infinite sequence. Furthermore, we define the numbers  $S^{(\ell,\rho)}(N)=k_0+1$,  where $2\sigma_{k_0}^{(\ell,\rho)}+\rho <N\leq 2\sigma_{k_0+1}^{(\ell,\rho)}+\rho $.  Since $C$ is non-singular we have
\begin{equation}
\label{eqn:Slrho}
 \lfloor N/2^{\ell} \rfloor \leq S^{(\ell,0)}(N)+S^{(\ell,1)}(N)\leq\lfloor N/2^{\ell}\rfloor+1.
\end{equation}

Let us now continue with (\ref{eqn:discrwmax}). Due to the above discussion we obtain

\begin{multline*} \lefteqn{|\cA_N(J_\beta(\ell)\times[0,\gamma))-N\lambda(J_\beta(\ell)\times[0,\gamma))|}\\
 \leq 1+\sum_{\rho\in\{0,1\}}
\left|\#\left\{ 0\leq \nu<N:\nu\in \{\sigma^{\ell,\rho}_k:{k\geq 0}\}, \{(2\nu+\rho)\alpha\}\in[0,\gamma)\right\}-S^{(\ell,\rho)}(N)\lambda((0,\gamma))\right|\\
\leq 1+\sum_{\rho\in\{0,1\}}S^{(\ell,\rho)}(N) \Ds[S^{(\ell,\rho)}(N)](\{(2\sigma_k^{(\ell,\rho)}+\rho)\alpha\}).
\end{multline*}
Together with (\ref{eqn:max}) this yields
\begin{equation}
\label{eqn:discrsigma}
 |\cA_N(J)-N\lambda(J)|\ll\frac{N}{K}+\log K+ \sum_{\substack{\ell=1 \\ \beta_{\ell}=1}}^{\kappa} \sum_{\rho\in\{0,1\}}S^{(\ell,\rho)}(N) \Ds[S^{(\ell,\rho)}(N)](\{(2\sigma_k^{(\ell,\rho)}+\rho)\alpha\})
\end{equation}
For each positive integer $\ell\leq\kappa$ with $\beta_{\ell}=1$, applying the Erd\H{o}s--Tur{\'a}n inequality with $H\leq N$, we obtain for $\rho\in\{0,1\}$
\begin{equation}
 \label{eqn:erdosturan}
 S^{(\ell,\rho)}(N) \Ds[S^{(\ell,\rho)}(N)](\{(2\sigma_k^{(\ell,\rho)}+\rho)\alpha\})
 \ll
 \frac{S^{(\ell,\rho)}(N)}{\lfloor H/2^{\ell}\rfloor}
 +
 \sum_{h=1}^{\lfloor H/2^{\ell} \rfloor}\frac{1}{h}\left| \sum_{k=0}^{S^{(\ell,\rho)}(N)-1} \e\left(2\sigma_{k}^{(\ell,\rho)}h\alpha\right) \right|.
\end{equation}
In view of (\ref{eqn:Slrho}), we clearly have
\begin{equation}
 \label{eqn:NdivHlogK}
 \sum_{\rho\in\{0,1\}} \sum_{\substack{\ell=1 \\ \beta_{\ell}=1}}^{\kappa} \frac{S^{(\ell,\rho)}(N)}{\lfloor H/2^{\ell}\rfloor} \ll \frac{N}{H}\log K.
\end{equation}
On the other hand,
\begin{multline}
 \label{eqn:thetabetaellrho}
 \left| \sum_{k=0}^{S^{(\ell,\rho)}(N)-1} \e\left(2\sigma_{k}^{(\ell,\rho)}h\alpha\right) \right| 
 \ll
 \left| \sum_{k=0}^{\lfloor N/2^{\ell}\rfloor-\theta_{\beta,\ell,\rho}} \Sigma_{\ell,\beta_1-\rho -s_{\bfc^{(1)}}(R_{\beta,\ell})}(k) \e\left(\left(2^{\ell}k+2R_{\beta,\ell}+\rho\right)h\alpha\right)\right|
 \\=
 \left| \sum_{k=0}^{\lfloor N/2^{\ell}\rfloor-\theta_{\beta,\ell,\rho}} \Sigma_{\ell,\beta_1-\rho -s_{\bfc^{(1)}}(R_{\beta,\ell})}(k) \e\left(2^{\ell}kh\alpha\right)\right|,
\end{multline}
where we have used (\ref{eqn:conditions}) with $k$ taking the role of $\lfloor\sigma/2^{\ell-1}\rfloor$ and with $\Sigma_{\ell,\beta_1-\rho -s_{\bfc^{(1)}}(R_{\beta,\ell})}(k)$ eliminating the undesired instances. Next, we dispose of the dependence on $\rho$ by observing that
\begin{multline}
 \label{eqn:eliminaterho}
  \left| \sum_{k=0}^{\lfloor N/2^{\ell}\rfloor-\theta_{\beta,\ell,\rho}} \Sigma_{\ell,\beta_1-\rho -s_{\bfc^{(1)}}(R_{\beta,\ell})}(k) \e\left(2^{\ell}kh\alpha\right)\right|
  \leq
  1+  \left| \sum_{k=0}^{\lfloor N/2^{\ell}\rfloor-1} \Sigma_{\ell,\beta_1-\rho -s_{\bfc^{(1)}}(R_{\beta,\ell})}(k) \e\left(2^{\ell}kh\alpha\right)\right|
  \\ \leq 
   1+\frac{1}{2}\sum_{z\in\{0,1\}}\left|   \sum_{k=0}^{\lfloor N/2^{\ell}\rfloor-1} \e\left(2^{\ell}kh\alpha+s_{\bfc^{(\ell)}}(k)\frac{z}{2}\right)\right| 
\end{multline}
using (\ref{eqn:sigmaella}) and noting that $\beta_1-\rho-s_{\bfc^{(1)}}(R_{\beta,\ell})$ is an integer. For $z=0$ the inner sum is a geometric sum bounded by $\|2^{\ell}h\alpha\|^{-1}$. The inequality (\ref{eqn:genupper_unsimplified}) now follows from combining this last observtion with (\ref{eqn:discrsigma})--(\ref{eqn:eliminaterho}).
\end{proof}

\begin{lemma}
 \label{lem:2additive}
 Under the assumptions of Proposition~\ref{prop:genupper} we have
 \begin{equation}
 \label{eqn:exptotrigprod}
 \left| \sum_{k=0}^{\lfloor N/2^{\ell}\rfloor-1} \e\left(2^{\ell} kh\alpha +s_{\bfc^{(\ell)}}(k)/2\right) \right|
 \leq 
 \sum_{r=0}^{\lfloor \log_{2}N\rfloor-\ell}2^r\Pi_{r,\bfc^{(\ell)}}(2^{\ell}h\alpha).
\end{equation}
\end{lemma}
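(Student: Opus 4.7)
To prove the lemma, I would abbreviate $M=\lfloor N/2^\ell\rfloor$, $\beta=2^\ell h\alpha$, and $\mathbf{d}=\bfc^{(\ell)}$, reducing the task to bounding $|S_M|$, where $S_M := \sum_{k=0}^{M-1}\e(k\beta+s_{\mathbf{d}}(k)/2)$, by the dyadic sum $\sum_{r=0}^{\lfloor\log_2 N\rfloor - \ell} 2^r\Pi_{r,\mathbf{d}}(\beta)$.

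The key observation is that for a power of two the sum factors. Since, for $0\le k<2^R$, the binary digits $k_0,\ldots,k_{R-1}$ of $k$ run independently over $\{0,1\}$ and $s_{\mathbf{d}}(k)=\sum_{j=0}^{R-1} k_j d_j$, we obtain
\begin{equation*}
 \sum_{k=0}^{2^R-1}\e\!\left(k\beta+\tfrac{s_{\mathbf{d}}(k)}{2}\right)=\prod_{j=0}^{R-1}\!\left(1+\e\!\left(2^j\beta+\tfrac{d_j}{2}\right)\right),
\end{equation*}
whose modulus equals $2^R\Pi_{R,\mathbf{d}}(\beta)$ via the identity $|1+\e(\theta)|=2|\cos(\pi\theta)|$.

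I would then reduce the general case to this via the standard dyadic decomposition along the binary expansion of $M$: writing $M=2^{R_1}+2^{R_2}+\cdots+2^{R_t}$ with $R_1>R_2>\cdots>R_t\ge 0$, split $\{0,\ldots,M-1\}$ into consecutive blocks $B_j$ of length $2^{R_j}$ starting at $A_j=\sum_{i<j}2^{R_i}$. Because $R_1>\cdots>R_{j-1}>R_j$, the binary digits of $A_j$ vanish below position $R_j$, so for $k=A_j+k'$ with $0\le k'<2^{R_j}$ the addition has no carry and $s_{\mathbf{d}}(k)=s_{\mathbf{d}}(A_j)+s_{\mathbf{d}}(k')$. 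Hence the block sum factors as
\begin{equation*}
 \sum_{k\in B_j}\e\!\left(k\beta+\tfrac{s_{\mathbf{d}}(k)}{2}\right)=\e\!\left(A_j\beta+\tfrac{s_{\mathbf{d}}(A_j)}{2}\right)\sum_{k'=0}^{2^{R_j}-1}\e\!\left(k'\beta+\tfrac{s_{\mathbf{d}}(k')}{2}\right),
\end{equation*}
and the inner sum has modulus $2^{R_j}\Pi_{R_j,\mathbf{d}}(\beta)$ by the previous step.

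Applying the triangle inequality over the $t$ blocks gives $|S_M|\le\sum_{j=1}^t 2^{R_j}\Pi_{R_j,\mathbf{d}}(\beta)$, and since the summands on the right of \eqref{eqn:exptotrigprod} are nonnegative, completing the partial sum to $r=0,1,\ldots,\lfloor\log_2 N\rfloor-\ell$ only enlarges the bound; the inequality $R_1=\lfloor\log_2 M\rfloor\le\lfloor\log_2 N\rfloor-\ell$ ensures that this is a legitimate enlargement. The main subtlety, and the only step that requires any care, is the carry-free additivity $s_{\mathbf{d}}(A_j+k')=s_{\mathbf{d}}(A_j)+s_{\mathbf{d}}(k')$; once that is pinned down, the rest is routine.
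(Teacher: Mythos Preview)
Your proof is correct and follows essentially the same approach as the paper's: a dyadic decomposition of the summation range along the binary expansion of $M=\lfloor N/2^\ell\rfloor$, factorization of each block of length $2^{R_j}$ into a product $\prod_{j=0}^{R_j-1}(1+\e(2^j\beta+d_j/2))$, and the triangle inequality followed by extension of the index set. The paper packages the carry-free step under the abstraction of a \emph{$2$-additive function} $f(v)=2^\ell vh\alpha+s_{\bfc^{(\ell)}}(v)/2$, but the underlying computation is identical to yours.
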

\begin{proof}
We shall prove that if $f:\bN_0\rightarrow\bR$ is a $2$-additive function, i.e.
 \begin{equation*}
  f(v_0+2v_1+2^2v_2+\cdots)=f(v_0)+f(2v_1)+f(2^2v_2)+\cdots,\qquad v_i\in\{0,1\},
 \end{equation*}
then
\begin{equation}
\label{eqn:2additive}
 \left|\sum_{v=0}^{V-1} \e( f(v))\right|\leq \sum_{r=0}^{\lfloor\log_2V\rfloor}\prod_{j=0}^{r-1}\left|1+ \e(f(2^j))\right|=\sum_{r=0}^{\lfloor\log_2V\rfloor}2^r\prod_{j=0}^{r-1}\left|\cos\left({\pi f(2^j)}\right)\right|
\end{equation}
for all $V\in\bN$. It is then easy to check that the function
$$
f(v)=2^{\ell}vh\alpha+\frac{s_{\bfc^{(\ell)}(v)}}{2}
$$
is 2-additive, and (\ref{eqn:exptotrigprod}) follows immediately from (\ref{eqn:2additive}). 

To prove (\ref{eqn:2additive}) we expand $V=V_0+2V_1+\cdots+2^{\lfloor \log_2 V\rfloor}V_{\lfloor \log_2 V\rfloor}$, $V_r\in\{0,1\}$ for all $0\leq r\leq \lfloor \log_2 V\rfloor$. Since $f$ is 2-additive we can estimate the sum on the left-hand side as follows
 \begin{multline*}
\left|\sum_{v=0}^{V-1} \e( f(v))\right|
\leq 
\sum_{\substack{r=0 \\ V_r=1}}^{ \lfloor \log_2 V\rfloor}\Bigg|\sum_{k=0}^{2^r-1} \e \Bigg(f(k)+\sum_{j=r}^{ \lfloor \log_2 V\rfloor} f(2^{j}V_j) \Bigg)   \Bigg|\\
\leq 
\sum_{r=0}^{ \lfloor \log_2 V\rfloor} \Bigg| \sum_{k=0}^{2^{r}-1}\e(f(k)) \Bigg|
=
\sum_{r=0}^{ \lfloor \log_2 V\rfloor}\Bigg|  \prod_{j=0}^{r-1}\left(1+\e(f(2^j)) \right)\Bigg|.
 \end{multline*}
 \end{proof}
 
For the actual proofs of our theorems we require results on the term involving $h\|2^{\ell}\alpha h\|$ relying on diophantine properties of $\alpha$. 

\begin{lemma}
 \label{lem:alphafinitetype}
Let $K,H,N$ be positive integers satisfying $K,H\leq N$ and let $\alpha\in\bR$ have bounded continued fraction coefficients. Then 
\begin{equation*}
 \sum_{\ell=1}^{\lfloor \log_2 K \rfloor} \sum_{h=1}^{\lfloor H/2^{\ell} \rfloor} \frac{1}{h\|2^{\ell}\alpha h\|}\ll_{\alpha} H \log K. 
\end{equation*}
Moreover, for almost all $\alpha\in(0,1)$ in the sense of the Lebesgue measure we have
\begin{equation*}
 \sum_{\ell=1}^{\lfloor \log_2 K \rfloor} \sum_{h=1}^{\lfloor H/2^{\ell} \rfloor} \frac{1}{h\|2^{\ell}\alpha h\|}\ll_{\alpha,\eps} N^{\eps} 
\end{equation*}for all $\eps>0$.
\end{lemma}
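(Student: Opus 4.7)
My plan is to exploit Diophantine properties of $\alpha$ to bound $1/\|2^{\ell}h\alpha\|$ pointwise and then sum.

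For the first assertion, having bounded continued fraction coefficients is equivalent to $\alpha$ being badly approximable, i.e., there exists $c=c(\alpha)>0$ with $\|q\alpha\|\geq c/q$ for every positive integer $q$. Applied with $q=2^{\ell}h$ this yields
$$\frac{1}{h\|2^{\ell}h\alpha\|}\leq\frac{2^{\ell}h}{ch}=\frac{2^{\ell}}{c}.$$
The inner sum over $h=1,\ldots,\lfloor H/2^{\ell}\rfloor$ is therefore at most $H/c$, uniformly in $\ell$; summing over $\ell=1,\ldots,\lfloor \log_2 K\rfloor$ gives the claimed bound $\ll_{\alpha}H\log K$. This direction is essentially immediate.

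For the metric assertion, a direct use of Khintchine's theorem ($\|q\alpha\|\gg_{\alpha,\delta}q^{-1-\delta}$ a.e.) would only yield a bound of order $N^{1+\delta}\log N$, which is far too weak. A sharper approach is first to rewrite the double sum. Setting $m=2^{\ell}h$, for each $m\leq H$ with $v_2(m)\geq 1$ the contribution of the $(\ell,h)$-pairs with $2^{\ell}h=m$ collapses to at most $2/m_{\mathrm{odd}}$, where $m_{\mathrm{odd}}:=m/2^{v_2(m)}$. Hence
$$\sum_{\ell,h}\frac{1}{h\|2^{\ell}h\alpha\|}\leq 2\sum_{\substack{m\leq H\\ v_2(m)\geq 1}}\frac{1}{m_{\mathrm{odd}}\|m\alpha\|}=2\sum_{a\geq 1}\sum_{\substack{b\text{ odd}\\ 2^ab\leq H}}\frac{1}{b\|b\beta_a\|},\qquad\beta_a:=2^a\alpha.$$
I would then invoke the classical metric estimate
$$\sum_{h=1}^{M}\frac{1}{h\|h\alpha\|}\ll_{\alpha,\delta}(\log M)^{2+\delta}\qquad\text{for a.e.\ }\alpha,$$
(proved via a dyadic decomposition of the level sets of $\|h\alpha\|$ together with quantitative equidistribution of $(\{h\alpha\})$), applied to each shift $\beta_a$. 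Since the collection $\{\beta_a\}_{a\in\bN}$ is countable, the estimate holds simultaneously for all $a$ by intersecting the corresponding measure-one sets. Summing over $a\leq\log_2 H$ then produces a bound polylogarithmic in $N$, hence $\ll_{\eps}N^{\eps}$.

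The main obstacle is securing the polylogarithmic metric estimate on $\sum_{h=1}^{M}1/(h\|h\alpha\|)$: plain Khintchine loses a factor of $N$, so one must carry out a dyadic level-set analysis combined with a quantitative discrepancy bound for $(\{h\alpha\})$. A secondary, more minor technical point is verifying uniformity of the implicit constants across the shifts $\beta_a$, but this is handled by the countable intersection argument together with the invariance of Lebesgue measure under the doubling map.
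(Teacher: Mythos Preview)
The paper itself does not prove this lemma; it defers both parts to the literature. So your proposal is being compared against those external arguments rather than against anything written here.

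Your treatment of the bounded-partial-quotients case is correct and pleasantly direct: the pointwise bound $\|2^\ell h\alpha\|\ge c(\alpha)/(2^\ell h)$ gives $1/(h\|2^\ell h\alpha\|)\le 2^\ell/c(\alpha)$, and the ranges of $h$ and $\ell$ then produce exactly $\ll_\alpha H\log K$. The standard continued-fraction treatment of the inner $h$-sum would actually give a much smaller, polylogarithmic bound for each fixed $\ell$, but that extra strength is unnecessary for the inequality as stated.

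For the metric part there is a genuine gap. The rewriting as $2\sum_{a\ge 1}\sum_{b\text{ odd},\,2^ab\le H}1/(b\|b\beta_a\|)$ with $\beta_a=2^a\alpha$ is fine, and the a.e.\ bound $\sum_{h\le M}1/(h\|h\gamma\|)\ll_{\gamma,\delta}(\log M)^{2+\delta}$ is indeed classical. But your handling of the constants is not valid. Countable intersection plus measure invariance under doubling ensure only that for a.e.\ $\alpha$ and \emph{each} $a$ there is \emph{some} finite $C_a(\alpha)$; invariance tells you the \emph{law} of $C_a$ is the same for every $a$, which is very different from a uniform pointwise bound. For a fixed $\alpha$ nothing prevents $(C_a(\alpha))_{a\ge 1}$ from growing so fast that $\sum_{a\le\log_2 H}C_a(\alpha)(\log H)^{2+\delta}$ fails to be $\ll_{\alpha,\eps} N^\eps$. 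This is precisely where the work lies, not a side issue. The standard remedy is to avoid applying the a.e.\ result to each $\beta_a$ separately and instead to run a single Borel--Cantelli argument on the full double sum (via a truncated first moment or a level-set count), which is essentially the approach of the reference the paper cites and also the pattern used later in the proof of Theorem~\ref{thm:metric}.
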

\begin{proof}
The first claim of this lemma can be found in \cite{drmlarHK}*{Proof of Theorem~2}. The second one is a consequence of \cite{LarPro13}*{Lemma~3}.\end{proof}

\begin{proposition} \label{prop:genlower}
Let $n\in\bN$ and let $N=2^{nL}$ with $L\in\bN$. Then
$$ND_N^*(z_k(n))\geq 2^{nL-3}\Pi_{nL,\boldsymbol{c}}(\alpha)-\frac{|\sin(2^{nL}\pi\alpha)|}{8\sin (\pi\alpha)}.$$
\end{proposition}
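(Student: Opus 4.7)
The plan is to produce a large discrepancy from the single anchored test box $B(\gamma):=[0,1/2)\times[0,\gamma)$. The key structural observation is that, for $\bfc$ as in \eqref{eqn:c} and with $c_0=1$, the first dyadic digit of $x_k(n)$ equals $s_\bfc(k):=c_0k_0+c_1k_1+\cdots$ modulo $2$; equivalently,
\[
\mathbb{1}[x_k(n)<1/2]=\tfrac12\bigl(1+(-1)^{s_\bfc(k)}\bigr).
\]
Writing $S:=\{k<N:x_k(n)<1/2\}$, the non-singularity of $C$ together with $N=2^{nL}$ forces $|S|=N/2$ exactly, and the function
\[
\Delta(\gamma):=\cA_N(\cS,B(\gamma))-\tfrac{N\gamma}{2}=\sum_{k\in S}\bigl(\mathbb{1}[\{k\alpha\}<\gamma]-\gamma\bigr)
\]
is (up to the normalisation factor $|S|$) the discrepancy function of the subsequence $(\{m_k\alpha\})_{k<|S|}$, so that $\sup_\gamma|\Delta(\gamma)|\le N D_N^*(z_k(n))$.

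In the first stage I would lower-bound the exponential sum $\sum_{k\in S}e(k\alpha)$. The identity above yields
\[
\sum_{k\in S}e(k\alpha)=\tfrac12\sum_{k<N}e(k\alpha)+\tfrac12\sum_{k<N}e\bigl(k\alpha+s_\bfc(k)/2\bigr).
\]
The first summand is the geometric sum of modulus $|\sin(2^{nL}\pi\alpha)/\sin(\pi\alpha)|$. For the second, the function $k\mapsto k\alpha+s_\bfc(k)/2$ is $2$-additive, so for $N=2^{nL}$ (a complete range over $nL$ dyadic digits) it factorises fully as $\prod_{j=0}^{nL-1}\bigl(1+e(2^j\alpha+c_j/2)\bigr)$, of modulus exactly $2^{nL}\Pi_{nL,\bfc}(\alpha)$; this is precisely the identity underlying the upper estimate in Lemma~\ref{lem:2additive}. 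A triangle inequality then produces
\[
\Bigl|\sum_{k\in S}e(k\alpha)\Bigr|\ge\tfrac12\Bigl(2^{nL}\Pi_{nL,\bfc}(\alpha)-\tfrac{|\sin(2^{nL}\pi\alpha)|}{|\sin(\pi\alpha)|}\Bigr).
\]

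In the second stage I would convert this into a bound on $\sup_\gamma|\Delta(\gamma)|$ by a Koksma-type argument. Applying Koksma's inequality to $\Delta$ with the test functions $\cos(2\pi\cdot)$ and $\sin(2\pi\cdot)$, each of total variation $4$ on $[0,1]$, produces $|\sum_{k\in S}\cos(2\pi k\alpha)|,|\sum_{k\in S}\sin(2\pi k\alpha)|\le 4\sup_\gamma|\Delta(\gamma)|$; combining the real and imaginary contributions carefully, one arrives at
\[
\sup_\gamma|\Delta(\gamma)|\ge\tfrac14\Bigl|\sum_{k\in S}e(k\alpha)\Bigr|.
\]
Multiplying the two displays together with $N=2^{nL}$ yields the claim.

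The main obstacle is extracting the sharp constant $1/4$ in the Koksma conversion of the second stage: the naive route of bounding $|\sum_{k\in S}e(k\alpha)|^2=(\sum\cos)^2+(\sum\sin)^2\le 2(4\sup|\Delta|)^2$ only yields $1/(4\sqrt 2)$, which would fall short of $1/8$ after combination with the factor $1/2$ from the triangle inequality. Getting the stated $1/8$ requires exploiting the particular sawtooth structure of $\Delta$ (whose jump points are the points $\{m_k\alpha\}$ of a Kronecker-type subsequence) so that the real and imaginary bounds cannot simultaneously be sharp.
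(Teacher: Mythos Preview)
Your approach is essentially identical to the paper's: restrict to the anchored box $[0,1/2)\times[0,\gamma)$, reduce to the one-dimensional discrepancy of the subsequence $(\{m_k\alpha\})$ with $M=|S|=N/2$, bound $MD_M^*$ below by a Koksma--Hlawka step against the first exponential sum, then split that sum via the character identity and factorise the twisted part into $2^{nL}\Pi_{nL,\bfc}(\alpha)$.

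The ``main obstacle'' you flag is not an obstacle. You do not need to bound $\sum\cos$ and $\sum\sin$ separately. Pick $\phi\in\bR$ so that $e^{-i\phi}\sum_{k\in S}e(k\alpha)$ is real and nonnegative; then
\[
\Bigl|\sum_{k\in S}e(k\alpha)\Bigr|=\sum_{k\in S}\cos(2\pi k\alpha-\phi),
\]
and a single application of Koksma's inequality to the real test function $g(x)=\cos(2\pi x-\phi)$, which has total variation $4$ and mean zero on $[0,1]$, gives
\[
\Bigl|\sum_{k\in S}e(k\alpha)\Bigr|\le 4\,M D_M^*(\{m_k\alpha\})=4\sup_\gamma|\Delta(\gamma)|.
\]
This is exactly the constant $1/4$ the paper invokes (it simply cites ``Koksma--Hlawka'' without further comment). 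Combining with your first-stage inequality yields the stated bound; no additional ``sawtooth structure'' argument is needed.
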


\begin{proof}
We use the trivial lower bound that is obtained by specifying the interval under consideration for the first coordinate
\begin{align*}ND_N^*(z_k(n))&=\sup_{0\leq \beta,\gamma \leq 1}\Big|\cA_N(z_k(n),[0,\beta)\times[0,\gamma))-N\lambda_{2}([0,\beta)\times[0,\gamma))    \Big|\\
&\geq \sup_{0\leq \gamma \leq 1}\Big|\cA_N(z_k(n),[0,1/2)\times[0,\gamma))-\frac{N}{2}\lambda_{1}([0,\gamma))    \Big|.
\end{align*}
We now define $(m_k)_{k\geq 0}$ as the increasing sequence of non-negative numbers  satisfying $s_{\boldsymbol{c}}(m_k)\equiv 0\pmod{2}$; or, in other words, let $(m_k)_{k\geq 0}$ be the sequence of indices corresponding to those elements of the perturbed Halton component $(x_k(n))_{k\geq0}$ that lie in the interval $[0,1/2)$.
Then  the above inequality together  with putting $M=N/2=2^{nL-1}$ implies
\begin{equation}\label{ineq:LB1}
ND_N^*(z_k(n))\geq M \Ds[M](\{m_k\alpha\})\geq \frac{1}{4}\left| \sum_{k=0}^{M-1} \e( m_k\alpha)\right|,
\end{equation}
where we used the Koksma--Hlawka inequality in the last step. In what follows we focus on the exponential sum.  
We have
\begin{align*}
  \sum_{k=0}^{M-1} \e(m_k\alpha)
  &=
  \sum_{\substack{m=0 \\ m=\mu_0+2\mu_1+\cdots}}^{2^{nL}-1} \e( m\alpha) \cdot\frac{1}{2}\sum_{z\in\{0,1\}} \e\left(\frac{z}{2} \sum_{j=0}^{nL-1}\mu_{j}c_j \right)\\
    &=
  \frac{1}{2}\sum_{\substack{m=0 \\ m=\mu_0+2\mu_1+\cdots}}^{2^{nL}-1} \e(m\alpha)\cdot \e\left(\frac{1}{2} \sum_{j=0}^{nL-1}\mu_{j}c_j \right)+  \frac{1}{2}\sum_{m=0 }^{2^{nL}-1}\e(m\alpha) 
\end{align*}
The absolute value of the second sum 
can  easily be bounded by $\frac{|\sin(2^{nL}\pi\alpha)|}{2\sin (\pi\alpha)}$ 
and the one of the first sum may be rewritten to yield the estimate
\begin{align}\label{ineq:LB2}
  \left|\sum_{k=0}^{M-1} \e( m_k\alpha)\right|
  &\geq  2^{nL-1} \Pi_{nL,\boldsymbol{c}}(\alpha)-\frac{|\sin(2^{nL}\pi\alpha)|}{2\sin (\pi\alpha)}
\end{align}
\end{proof}

\begin{remark}
 Observe that we have directly linked the discrepancy of $(z_k(n))_{k\geq0}$ to the subsequence $(\{m_k \alpha\})_{k\geq 0}$ of the pure Kronecker sequence via \eqref{ineq:LB1}. If $n=1$, $(m_k)_{k\geq 0}$ translates to  the increasing sequence of non-negative integers with an even sum of digits in base $2$ which are better known as \emph{evil numbers}. The star discrepancy of the associated  \emph{evil Kronecker sequence} with $\alpha$ having bounded continued fraction coefficients has been thoroughly studied in \cite{AisOnp15} and yields the exponents  $\log_4 3\pm\eps$, which coincide with our values  $a(1)\pm\eps$.
\end{remark}

In a recent paper Aistleitner and Larcher focused on metric discrepancy bounds for sequences of the form $(\{a_k \alpha\})_{k\geq 1}$ with $a_k$ growing at most polynomially in $k$. Naturally, this perfectly fits into our setting and we will make use of their result below (see \cite{AisMet16}*{Theorem 3}) for establishing the subsequent Lemma~\ref{lem:LB}, which, in turn, is essential for the proof of Theorem~\ref{thm:metric}.

\begin{lemma}\label{lem:AL}
 Let $(a_k)_{k\geq1}$ be a sequence of integers such that for some $t\in\bN$ we have $a_k\leq k^{t}$ for all $k$ large enough. Assume there exists a number $\tau\in(0,1)$ and a strictly increasing sequence $(B_L)_{L\geq1}$ of positive integers with $(B^{\prime})^{L}\leq B_L \leq B^{L}$ for some reals $B^{\prime},B$ with $1<B^{\prime}<B$, such that for all $\eps>0$ and all $L>L_0(\eps)$ we have 
 \begin{equation*}
  \int_{0}^{1}\left|\sum_{k=1}^{B_L}   \e(a_k\alpha)\right|\rmd\alpha>B_L^{\tau-\eps}.
 \end{equation*}
Then for almost all $\alpha\in[0,1)$ for all $\eps>0$ for the star discrepancy $D^*_N$ of the sequence $(\{a_k\alpha\})_{k\geq1}$ we have
\begin{equation*}
 N D^*_N\gg N^{\tau-\eps}.
\end{equation*}

\end{lemma}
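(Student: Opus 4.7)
The plan is to first reduce the discrepancy lower bound to an exponential sum lower bound via Koksma's inequality. Applied to the function $f(x)=\e(x)$, which has bounded total variation on $[0,1)$ and mean zero, Koksma's inequality yields
$$\left|\sum_{k=1}^{N}\e(a_k\alpha)\right|\ll N D_N^*(\{a_k\alpha\}).$$
Hence it suffices to show that for almost every $\alpha\in[0,1)$ and every $\eps>0$ the lower bound $|\sum_{k=1}^{B_L}\e(a_k\alpha)|\geq B_L^{\tau-\eps}$ holds for infinitely many $L$; combined with $B_L\leq B^L$ this will immediately give $ND_N^*\gg N^{\tau-\eps}$ along the subsequence $N=B_L$, which is what we need.

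Write $S_L(\alpha):=\sum_{k=1}^{B_L}\e(a_k\alpha)$ and, for a fixed $\eps>0$, consider the level sets $E_L:=\{\alpha\in[0,1):|S_L(\alpha)|\geq\tfrac{1}{2}B_L^{\tau-\eps/2}\}$. The hypothesis $\|S_L\|_1\geq B_L^{\tau-\eps/2}$ combined with the trivial bound $\|S_L\|_\infty\leq B_L$ shows via a direct splitting argument that $\lambda(E_L)\geq\tfrac{1}{2}B_L^{\tau-\eps/2-1}$. This measure is, however, summable in $L$ because $B_L$ grows exponentially and $\tau<1$, so the convergence half of Borel--Cantelli gives nothing. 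A sharper estimate can be extracted from the second moment $\|S_L\|_2^2\ll B_L$, which follows from the polynomial growth $a_k\leq k^t$ (this bound limits the number of coincidences $a_i=a_j$), together with Paley--Zygmund, yielding $\lambda(E_L)\gg B_L^{2(\tau-\eps/2)-1}$; but this still is summable whenever $\tau\leq 1/2$.

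The decisive step is therefore a divergent Borel--Cantelli argument carried out along a sparse subsequence $(L_j)$ chosen so that the exponential sums $S_{L_j}$ decorrelate. Concretely, I would estimate the pairwise correlation
$$\int_{0}^{1}|S_{L_j}(\alpha)|^{2}|S_{L_m}(\alpha)|^{2}\,\rmd\alpha\ll B_{L_j}B_{L_m}+R_{j,m},$$
where the error term $R_{j,m}$ counts quadruples of indices with $a_{k_1}+a_{k_3}=a_{k_2}+a_{k_4}$, and the polynomial bound $a_k\leq k^t$ limits $R_{j,m}$ by Vinogradov-type counting. Plugging this into the Chung--Erd\H{o}s (or Kochen--Stone) inequality gives that almost every $\alpha$ lies in $E_{L_j}$ for infinitely many $j$; reading this back through the Koksma estimate of the first paragraph then produces the desired bound. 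The main obstacle is clearly this decorrelation estimate: verifying that the exponential sums at different levels $L_j$ are sufficiently orthogonal to overcome the exponentially small individual measures. The polynomial hypothesis $a_k\leq k^t$ is precisely the ingredient that keeps the required diophantine counting under control.
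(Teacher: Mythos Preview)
The paper does not actually prove this lemma: it is quoted verbatim as Theorem~3 of Aistleitner and Larcher \cite{AisMet16}, so there is no in-paper proof to compare against. Your reduction via Koksma's inequality to a lower bound on $|S_L(\alpha)|$ for infinitely many $L$ is correct and is exactly how the lemma gets applied later (see the proof of Lemma~\ref{lem:LB}).

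The genuine gap is in your final step. You propose to run a divergent Borel--Cantelli argument by bounding $\int_0^1 |S_{L_j}|^2|S_{L_m}|^2\,\rmd\alpha$ via ``Vinogradov-type counting'' of quadruples $a_{k_1}+a_{k_3}=a_{k_2}+a_{k_4}$, and you assert that the hypothesis $a_k\le k^t$ keeps this under control. It does not. The bound $a_k\le k^t$ is a \emph{growth} condition, not an arithmetic one: it says nothing whatsoever about how many additive coincidences the sequence $(a_k)$ has. (For instance, take $a_k$ to be the $k$th non-negative integer with an even digit sum; then $a_k\le 2k$, yet the additive energy is maximal up to constants.) Vinogradov's mean value estimates concern the specific sequences $a_k=k^s$, and there is no analogue for an arbitrary polynomially bounded integer sequence. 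So the correlation bound you need simply cannot be extracted from the stated hypotheses in this way, and the proposed $L^2$ decorrelation route breaks down.

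What the growth hypothesis \emph{does} buy, and what the Aistleitner--Larcher argument actually exploits, is that $S_L(\alpha)$ is a trigonometric polynomial of degree at most $B_L^{\,t}$. This is an analytic rather than arithmetic input: it bounds the number of sign changes of $|S_L|-\theta$ by $O(B_L^{\,t})$, so the super-level set $E_L$ is a union of at most $O(B_L^{\,t})$ intervals. Combined with the $L^1$ lower bound and the trivial $L^\infty$ bound, this gives both a lower bound on $\lambda(E_L)$ and, crucially, control on the \emph{geometry} of $E_L$ (interval lengths bounded below by a negative power of $B_L$). One can then run a Cassels--Gallagher style zero--one argument for $\limsup E_L$, which requires only this coarse interval structure rather than any second-moment decorrelation. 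Your sketch misses this mechanism entirely.
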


\begin{lemma}\label{lem:LB}
Let $n\in\bN$. If there exists a number $\tau=\tau(n)$ such that for every $\eps>0$ the inequality 
$$\int_{[0,1]}\left(2^{nL}\Pi_{nL,\boldsymbol{c}}(\alpha)-\frac{|\sin(2^{nL}\pi \alpha)|}{\sin(\pi\alpha)}\right)d\alpha\geq 2^{nL(\tau-\eps)}$$
holds for $L$ large enough, then 
$$ND_N^*\gg N^{\tau-\eps}.$$
\end{lemma}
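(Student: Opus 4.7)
The plan is to combine the pointwise identity derived in the proof of Proposition~\ref{prop:genlower} with the metric criterion of Lemma~\ref{lem:AL} applied to the integer sequence $(m_k)_{k\geq 0}$. That is, we treat the exponential sum $\sum_{k=0}^{M-1}\e(m_k\alpha)$ appearing in \eqref{ineq:LB1} as the key intermediate object: the hypothesis of Lemma~\ref{lem:LB} controls its $L^{1}$-norm from below, Lemma~\ref{lem:AL} upgrades this $L^{1}$-bound into a metric discrepancy estimate for the Kronecker subsequence $(\{m_k\alpha\})_{k\geq 0}$, and \eqref{ineq:LB1} then transfers this estimate to the full hybrid sequence $(z_k(n))_{k\geq 0}$.

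To carry this out, first set $a_k:=m_{k-1}$ for $k\geq 1$ and $B_L:=2^{nL-1}$, so that the sum in the hypothesis of Lemma~\ref{lem:AL} runs over precisely those indices $1\leq k\leq B_L$ with $a_k<2^{nL}$. Since exactly half of the non-negative integers below $2^{nL}$ have an even $\bfc$-weighted digit sum, we have $a_k\leq 2k$, so the polynomial growth assumption of Lemma~\ref{lem:AL} holds (with, say, $t=2$), and $(B_L)$ is strictly increasing and squeezed between $(B')^L$ and $B^L$ for suitable reals $1<B'<B<2^{n+1}$. Next, integrating the pointwise inequality \eqref{ineq:LB2}, which bounds $|\sum_{k=0}^{B_L-1}\e(m_k\alpha)|$ from below by $2^{nL-1}\Pi_{nL,\bfc}(\alpha)$ minus the geometric tail $|\sin(2^{nL}\pi\alpha)|/(2\sin(\pi\alpha))$, yields
\[
\int_0^1\Bigg|\sum_{k=1}^{B_L}\e(a_k\alpha)\Bigg|\rmd\alpha\geq \tfrac{1}{2}\int_0^1\Bigg(2^{nL}\Pi_{nL,\bfc}(\alpha)-\frac{|\sin(2^{nL}\pi\alpha)|}{\sin(\pi\alpha)}\Bigg)\rmd\alpha\geq \tfrac{1}{2}\cdot 2^{nL(\tau-\eps)}\geq B_L^{\tau-2\eps}
\]
for every $\eps>0$ and all $L$ sufficiently large, using the standing hypothesis of the lemma. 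This matches the integral assumption of Lemma~\ref{lem:AL}, which then delivers $N D_N^*(\{m_k\alpha\})\gg N^{\tau-\eps}$ for almost every $\alpha\in(0,1)$; moreover, inspection of the Aistleitner--Larcher argument shows that this bound is realized along the subsequence $N=B_L=2^{nL-1}$ for infinitely many $L$.

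Finally, inequality \eqref{ineq:LB1} with $N=2^{nL}$ and $M=2^{nL-1}$ gives $N D_N^*(z_k(n))\geq M D_M^*(\{m_k\alpha\})\gg M^{\tau-\eps}\gg N^{\tau-\eps}$ for infinitely many $N$ of the form $2^{nL}$, completing the argument. I expect the main obstacle to be this subsequence-matching: the relation $ND_N^*(z_k(n))\geq MD_M^*(\{m_k\alpha\})$ holds only along dyadic indices $N=2^{nL}$, whereas Lemma~\ref{lem:AL} a priori provides the discrepancy bound for some unspecified infinite set of lengths. Choosing $B_L=2^{nL-1}$ aligns the two subsequences, but doing so cleanly requires tracking the proof of Lemma~\ref{lem:AL} carefully enough to verify that the large-discrepancy bound is indeed attained at $N=B_L$ for infinitely many $L$.
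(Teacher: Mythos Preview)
Your approach matches the paper's exactly: the paper's proof is the one-line statement that Lemma~\ref{lem:AL} together with \eqref{ineq:LB1} and \eqref{ineq:LB2} gives the result, without addressing the subsequence-matching issue you flag. That concern is real but admits a simpler fix than inspecting the Aistleitner--Larcher argument: since the perturbed Halton component is a digital $(0,1)$-sequence in base $2$, the first inequality in \eqref{ineq:LB1} extends to \emph{every} $N$ up to an additive $O(1)$ error (with $M=M(N)$ the number of the first $N$ points lying in $[0,1/2)$, so that $|M(N)-N/2|\leq 1$), and hence any $M$ at which Lemma~\ref{lem:AL} produces $MD_M^*(\{m_k\alpha\})\gg M^{\tau-\eps}$ transfers directly to the corresponding $N\sim 2M$ without needing to know that $M$ is of the form $2^{nL-1}$.
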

\begin{proof}
This immediately follows from Lemma~\ref{lem:AL} together with the inequalities in \eqref{ineq:LB1} and in \eqref{ineq:LB2}.
\end{proof}

\section{Sharp general and metric estimates for certain lacunary trigonometric products}
\label{sec:trigprod}
To prove Theorems~\ref{thm:genupper} and \ref{thm:genlower}, we need to establish a good uper bound for the trigonometric products $\Pi_{r,\bfc}(\alpha)$ for a wide class of numbers $\alpha$ and also exhibit a specific example to underline the sharpness of our estimate. These are given in Theorem~\ref{thm:trigprod} below. We then focus on metric results for these trigonometric products and establish Proposition~\ref{prop:trigprodmetric}, which is essential for our study of metric discrepancy bounds.

\begin{theorem}
\label{thm:trigprod}
For our periodic perturbing sequence $\bfc$ with period length $n$, as given in (\ref{eqn:c}), we have that for all $\alpha\in[0,1]$, all $r\in\bN$, and all $\ell\in\bN_0$
\begin{equation*}
 \Pi_{r,\bfc^{(\ell)}}(\alpha)\ll_n 2^{-r}\left( \cot \frac{\pi}{2(2^n+1)} \right)^{r/n}.
\end{equation*}
Moreover, this bound is asymptotically optimal in $r$, since for $\ell=0$
$$\Pi_{nL,\bfc}\left(\frac{2^{n-1}}{2^n+1} \right)= 2^{-nL}\left( \cot \frac{\pi}{2(2^n+1)} \right)^{L}.$$
\end{theorem}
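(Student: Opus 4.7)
I begin by reducing to the case $r = nL$ with $L \in \bN$: any remaining at most $n - 1$ factors contribute only a multiplicative constant $\ll_n 1$. Grouping the $nL$ factors into $L$ consecutive blocks of size $n$, the $n$-periodicity of $\bfc$ places a single entry $1$ at position $t_0 \equiv -\ell \pmod{n}$ within each block of $\bfc^{(\ell)}$. Combining $|\cos(\theta + \pi/2)| = |\sin\theta|$ with the Vieta identity $\prod_{t=0}^{n-1}\cos(2^t\phi) = \sin(2^n\phi)/(2^n\sin\phi)$, each block evaluated at $\phi_s = 2^{ns}\pi\alpha$ equals $|\tan(2^{t_0}\phi_s)|\cdot|\sin(2^n\phi_s)|/(2^n|\sin\phi_s|)$. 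Telescoping the sine factors across the $L$ blocks yields the core identity
\begin{equation*}
\Pi_{nL,\bfc^{(\ell)}}(\alpha) = \frac{|\sin(2^{nL}\pi\alpha)|}{2^{nL}|\sin\pi\alpha|}\prod_{s=0}^{L-1}|\tan\tau_s|, \qquad \tau_s = 2^{t_0+ns}\pi\alpha.
\end{equation*}

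\textbf{Sharpness at $\alpha_* = 2^{n-1}/(2^n+1)$.} Set $\xi = \pi/(2(2^n + 1))$; direct algebra gives $2^n\xi = \pi/2 - \xi$. Starting from $\phi_0 = \pi\alpha_* = \pi/2 - \xi$, an induction using $\phi_{s+1} = 2^n\phi_s$ shows $\phi_s \equiv \pi/2 - (-1)^s\xi \pmod{\pi}$ for all $s \ge 0$; thus $|\sin\phi_s| = \cos\xi$, and for $1 \le t \le n - 1$ one has $|\cos(2^t\phi_s)| = |\cos(2^t\xi)|$ (since $2^{t-1}\pi \in \pi\bZ$). Applying Vieta's identity together with $\sin(2^n\xi) = \cos\xi$ gives $\prod_{t=0}^{n-1}\cos(2^t\xi) = \cos\xi/(2^n\sin\xi)$, so the per-block product collapses to $\cot\xi/2^n$. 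Multiplying over the $L$ blocks yields $\Pi_{nL,\bfc}(\alpha_*) = 2^{-nL}\cot^L\xi$, establishing both the explicit equality and the asymptotic sharpness of the upper bound.

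\textbf{Upper bound.} It remains to show, uniformly in $\alpha$, that
\begin{equation*}
\frac{|\sin(2^{nL}\pi\alpha)|}{|\sin\pi\alpha|}\prod_{s=0}^{L-1}|\tan\tau_s| \ll_n \cot^L\xi.
\end{equation*}
Writing $\tau_s \equiv \pi/2 + \delta_s \pmod{\pi}$ with $\delta_s \in (-\pi/2,\pi/2]$ gives $|\tan\tau_s| = |\cot\delta_s|$ and the circle recursion $\delta_{s+1} \equiv \pi/2 + 2^n\delta_s \pmod{\pi}$; the period-$2$ orbit $\delta = \pm\xi$ of this dynamics saturates the bound in agreement with the previous step. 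My plan is to exhibit a bounded positive function $h: \bR/\pi\bZ \to \bR$ satisfying the Livsic-type coboundary inequality $|\cot\delta|\,h(\pi/2 + 2^n\delta \bmod \pi) \le \cot\xi \cdot h(\delta)$ for every $\delta \in (-\pi/2,\pi/2]$; iterating then gives $\prod_{s=0}^{L-1}|\cot\delta_s| \le (\sup h/\inf h)\cot^L\xi$, yielding the desired estimate with implicit constant $\ll_n 1$. The values of $h$ along the orbit $\pm\xi$ are dictated by Step 2, and off the orbit $h$ is extended so that it absorbs the excess of $|\cot\delta|/\cot\xi$ via a case analysis on the doubling-map dynamics---analogous to the construction used in \cite{AisOnp15} for the case $n = 1$, where $F_\ell$ reduces to $|\sin\pi\alpha|$. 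The degenerate regime where $|\sin\pi\alpha|$ is so small that the telescoping prefactor becomes unusable is handled separately by the trivial global bound $\Pi \le 1$. The main obstacle is precisely the construction of $h$, equivalently the verification that the doubling-with-shift map admits the orbit $\pm\xi$ as the unique maximizer for the potential $\log|\cot\delta|$: the explicit equality of Step 2 provides the target value and the blueprint for the shape of $h$ on the orbit, but establishing the pointwise coboundary inequality off the orbit requires a delicate trigonometric case analysis, which is the only non-routine ingredient of the proof.
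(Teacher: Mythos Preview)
Your proposal contains a genuine gap: the core of the upper bound---the construction of the coboundary function $h$ satisfying
\[
|\cot\delta|\,h\!\left(\tfrac{\pi}{2}+2^n\delta \bmod \pi\right)\le \cot\xi\cdot h(\delta)
\]
for all $\delta$---is announced but never carried out. You explicitly identify this as ``the main obstacle'' and ``the only non-routine ingredient,'' yet it is precisely the substance of the theorem. Everything else (block decomposition, the sharpness computation at $\alpha_*$) is routine; the whole difficulty is establishing that the period-$2$ orbit $\pm\xi$ maximises the ergodic average of $\log|\cot\delta|$, and you have not done this.

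The paper avoids the Livsic-coboundary route altogether. Instead of a one-step transfer inequality, it proves a \emph{two-block} Gel{\cprime}fond inequality (Lemma~\ref{lemma:gelfond}): writing the block product as $G_n(x)$ with $x=|\sin(2^{\delta n}\pi\alpha')|$, one shows that for every $x\in[0,1]$ either $G_n(x)\le G_n(\xi_n)$ or $G_n(x)\,G_n(f_n(x))\le G_n(\xi_n)^2$. This weaker dichotomy already yields $\prod_{\delta=0}^{d-1}G_n(\cdot)\le G_n(\xi_n)^{d-1}$, which is all that is needed. The proof of the lemma is a lengthy but entirely explicit trigonometric case analysis over several parameter ranges---exactly the ``delicate case analysis'' you allude to, but actually executed. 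If you want to complete your approach, this lemma is the concrete substitute for your missing $h$.

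There is also a secondary problem. Your telescoped identity carries the prefactor $|\sin(2^{nL}\pi\alpha)|/|\sin\pi\alpha|$, and you propose to dispose of the regime where $|\sin\pi\alpha|$ is small via the trivial bound $\Pi\le 1$. This cannot work: since $2^{-n}\cot\xi<1$, the target $2^{-r}(\cot\xi)^{r/n}\to 0$ as $r\to\infty$, so $\Pi\le 1$ gives nothing. The paper sidesteps this entirely by not telescoping: it works directly with the bounded block function $G_n(x)=f_n(x)/(2^n\sqrt{1-x^2})$, so no artificial singularity is introduced. If you insist on the telescoped form, you must absorb the $1/|\sin\pi\alpha|$ into the first few tangent factors rather than invoke a global trivial bound.
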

 The case $n=1$ has already appeared in \cite{FouSom96}. In this case, two viable strategies are known to treat such products: one by Fouvry and Mauduit \cite{FouSom96} and one by Gel{\cprime}fond \cite{GelSur67}. For our purposes, i.e.  $\bfc$ being of the particular form \eqref{eqn:c}, numerical experiences suggested to pursue the latter.

To this end, we require some notation and initial remarks. We define a system of functions $\{f_\nu:~\nu\geq0\}$ with $f_{\nu}:[0,1]\rightarrow[0,1]$, where
\begin{equation*}
 f_0(x)=x,\qquad f_1(x)=2 x\sqrt{1-x^2},\qquad f_{\nu}=f_1\circ f_{\nu-1}(x),~\nu\geq2.
\end{equation*}
Furthermore, we abbreviate $g(x)=\sqrt{1-x^2}$. We are interested in upper bounds of the function
\begin{equation}
\label{eqn:gsimplified}
G_n=f_0\cdot\prod_{\nu=1}^{n-1}g\circ f_\nu
=
f_{0}\prod_{\nu=1}^{n-1}\sqrt{1-f_{\nu}^2}
=
f_0 \prod_{\nu=1}^{n-1}\frac{f_{\nu+1}}{2f_{\nu}}
=
\frac{f_0 f_n}{2^{n-1}f_1}=
\frac{f_n}{2^n\sqrt{1-f_0^2}}.
\end{equation}
The role of the functions $g$ and $f_{\nu}$ is revealed by taking $x=|\sin y|$. Observe that $g$ now corresponds to a transition to $|\cos y|$ and $f_1$ corresponds to doubling the angle $y$, i.e. $f_1(x)=|\sin(2y)|$. It thus immediately follows that
\begin{equation}
\label{eqn:xi}
 \xi_n=\sin\left( \frac{2^n \pi}{2(2^n+1)}\right)
\end{equation}
is a fixed point of $f_{n}$, i.e.  $\xi_n= f_n(\xi_n)$. This together with (\ref{eqn:gsimplified}) implies
\begin{equation*}
 G_n(\xi_n)= \frac{1}{2^n} \tan\left(\frac{2^n \pi}{2(2^n+1)}  \right)  =\frac{1}{2^n}\cot\left(\frac{\pi}{2(2^n+1)} \right).
\end{equation*}
Moreover, it is an evident observation that $G_n$ and $\xi_n$ are closely related to the trigonometric product and the \emph{bad} $\alpha$ from Theorem~\ref{thm:trigprod}, respectively.
The lemma below generalizes Gel{\cprime}fond's approach.
\begin{lemma}
 \label{lemma:gelfond}
 Let $n\in\bN$ and $\xi_n$ be given as in (\ref{eqn:xi}). For all $x\in[0,1]$ either
 \begin{equation*}
  G_n(x)\leq G_n(\xi_n)\quad\text{or} \quad G_n(x)\left(G_n\circ f_n \right)(x)\leq \big(G_n(\xi_n)\big)^2.
 \end{equation*}
\end{lemma}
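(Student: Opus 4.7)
My plan is to rule out the non-trivial case: assume $G_n(x)>G_n(\xi_n)$ and show $G_n(x)\,G_n(f_n(x))\leq G_n(\xi_n)^2$. The first step is to obtain a workable closed form for the product. Substituting $x=\sin\theta$ with $\theta\in[0,\pi/2]$, the formula from \eqref{eqn:gsimplified} reads $G_n(\sin\theta)=|\sin(2^n\theta)|/(2^n\cos\theta)$. Since $f_n=f_1^{\circ n}$ gives $f_n\circ f_n=f_{2n}$, and $\sqrt{1-f_n(x)^2}=|\cos(2^n\theta)|$, a direct computation yields
$$G_n(x)\,G_n(f_n(x))\;=\;\frac{|\sin(2^{2n}\theta)\,\sin(2^n\theta)|}{4^n\,\cos\theta\,|\cos(2^n\theta)|}.$$
With $\xi_n=\sin\theta_n$, $\theta_n=2^n\pi/(2(2^n+1))$, and $G_n(\xi_n)=\tan\theta_n/2^n$, the target inequality is equivalent to
$$\frac{|\sin(2^{2n}\theta)\,\sin(2^n\theta)|}{\cos\theta\,|\cos(2^n\theta)|}\;\leq\;\tan^2\theta_n,$$
to be established whenever $|\sin(2^n\theta)|>\tan\theta_n\cos\theta$.

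Next, I would dissect the level set $A_n=\{\theta\in[0,\pi/2]:G_n(\sin\theta)>G_n(\xi_n)\}$. Since $\theta\mapsto|\sin(2^n\theta)|/\cos\theta$ has finitely many arches on $[0,\pi/2]$, $A_n$ is a disjoint union of open intervals clustered around those of its local maxima which exceed $\tan\theta_n$. The fixed-point identity $f_n(\xi_n)=\xi_n$ places $\xi_n$ in exactly one component and shows $P(\xi_n)=G_n(\xi_n)^2$, where $P:=G_n\cdot(G_n\circ f_n)$. On the boundary $\partial A_n$ one has $G_n=G_n(\xi_n)$, so it suffices to verify $G_n(f_n(x))\leq G_n(\xi_n)$ there, which is a direct trigonometric check from the explicit form of $\theta_n$. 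For the interior of each component I would locate the critical points of $P$ by solving $P'(\theta)=0$, exploit the fact that $f_n$ acts on the $\theta$-scale by multiplication by $2^n$ (mod $\pi$) so that its derivative has constant magnitude, and argue that $\xi_n$ is the unique interior maximiser.

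The hardest part will be the interior critical-point analysis, because $P'(\theta)=0$ mixes the three incommensurate angular scales $\theta$, $2^n\theta$, $2^{2n}\theta$ and reduces to a transcendental equation that I see no general closed-form solution for. A cleaner fallback I would pursue in parallel is to exploit the strong expansion of $f_n$: near the fixed point $\xi_n$ one has $|f_n(x)-\xi_n|\asymp 2^n|x-\xi_n|$, so for $x\in A_n\setminus\{\xi_n\}$ the image $f_n(x)$ leaves $A_n$ almost immediately, and the resulting rapid decay of $G_n(f_n(x))$ compensates the slight excess of $G_n(x)$ over $G_n(\xi_n)$. Pairing this expansion bound with the boundary control above should yield the lemma without an exhaustive case distinction.
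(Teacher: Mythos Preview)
Your proposal has a genuine gap. You correctly derive the closed form for $P(\theta)=G_n(x)G_n(f_n(x))$ and set up a maximum-principle scheme on $A_n=\{G_n>G_n(\xi_n)\}$, but neither branch of the scheme is actually carried out. On the boundary you claim that $G_n(f_n(x))\le G_n(\xi_n)$ is ``a direct trigonometric check''; it is not, because the boundary points of $A_n$ are the solutions of the transcendental equation $|\sin(2^n\theta)|=\tan\theta_n\cos\theta$, and there is no a priori reason why $f_n$ should map each of them outside $A_n$. For the interior you yourself concede that the critical-point equation mixing the scales $\theta,2^n\theta,2^{2n}\theta$ has no tractable form, and the fallback expansion argument (``$f_n(x)$ leaves $A_n$ almost immediately'', ``rapid decay compensates slight excess'') is a heuristic, not an estimate: it says nothing about components of $A_n$ not containing $\xi_n$, nor does it give a quantitative bound valid over the whole component containing $\xi_n$.

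The paper sidesteps all of this by proving a strictly stronger dichotomy: the \emph{first} inequality holds for every $x\le\xi_n$, and the \emph{second} for every $x>\xi_n$. This single interval split (rather than the potentially complicated level set $A_n$) is what makes the problem tractable. On $[0,\xi_n]$ one bounds $\tilde G_n(y)=|\sin(2^n y\pi/2)|/(2^n\cos(y\pi/2))$ by $G_n(\xi_n)$ via two sub-cases, using only the trivial bound $|\sin|\le 1$ on most of the range and a monotonicity argument near $\xi_n$. On $(\xi_n,1]$ one parametrises by $z\in[0,4^n]$ and shows $\tilde G_n(y)\tilde G_n(2^n y)\le G_n(\xi_n)^2$ directly, again by splitting into two sub-cases and reducing to sign checks on explicit derivatives; the key technical step is the inequality \eqref{eq:case2}, which controls the logarithmic derivative of $P$. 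No critical-point equation in three angular scales ever needs to be solved.
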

\begin{proof}Note that for $n=1$ the result was obtained by  Gel{\cprime}fond \cite{GelSur67} already. 
In the following we concentrate on $n>1$. More precisely,  we verify the first inequality whenever $x\leq \xi_n$ and the second in the case where $x>\xi_n$. We set $x(y)=|\sin (y\pi/2)|$, $y\in[0,1]$,  as well as 
$$
\tilde{G}_n(y)=G_n(x(y))=\frac{\left|\sin\left( 2^n y \pi/2 \right)\right|}{2^n \cos\left( y\pi/2 \right)}.
$$ 
We therefore need to show 
\begin{equation}
\label{eqn:gelfondsimple}
 \tilde{G}_n(y)\leq \tilde{G}_n\left( \frac{2^n}{2^n+1} \right)=G_n(\xi_n),\qquad\text{for all } 0\leq y \leq \frac{2^n}{2^n+1}
\end{equation}
and
\begin{equation}
\label{eqn:gelfondouble}
 \tilde{G}_n(y) \tilde{G}_n(2^ny)\leq \left(G_n(\xi_n)\right)^2,\qquad\text{for all }  \frac{2^n}{2^n+1}<y \leq1.
\end{equation}

Let us first of all focus on (\ref{eqn:gelfondsimple}). This inequality is established by distinguishing between two cases w.r.t. $y$.
\begin{itemize}
\item $y\in[0,(2^n-1)/2^n]$. We use the trivial estimate $$\frac{|\sin(2^ny \pi/2)|}{2^n \cos(y \pi/2)}\leq \frac{1}{2^n\cos((2^n-1)\pi/2^{n+1})}$$ and subsequently show 
$${\cos((2^n-1)\pi/2^{n+1})}\geq \cot(2^n\pi/(2(2^n+1)))$$ 
or, equivalently,
$$\sin(\pi/2^{n+1})\geq \tan(\pi/(2(2^n+1))).$$
To this end we define $z:=1/2^n$ and observe that $z\in[0,1/4]$. We may now rewrite the above inequality as
$$h_1(z):=\sin\left(\frac{z\pi}{2}\right)\geq \tan\left(\frac{z\pi}{2(z+1)} \right)=:h_2(z).$$ 
For $z=0$ we  have equality and for $z\in[0,1/4]$ we observe that $h_1(z)\geq 0$ and $h_2(z)\geq 0$. Moreover, $h_1^{\prime}(z)\geq h_2^{\prime}(z)$ or, equivalently,
$$1\leq \cos\left(\frac{z\pi}{2}\right)(z+1)\cdot\cos^2\left(\frac{z\pi}{2(z+1)}\right)(z+1).$$
Indeed, in what follows we show that each of the two factors above (separated by the dot) is greater or equal to 1.
Let us begin with $\cos(z\pi/2)(z+1)\geq 1$. Equality holds for $z=0$ and the derivative of the left-hand side satisfies
$$-\sin(z\pi/2)(z+1)\pi/2+\cos(z\pi/2)\geq \cos(\pi/8)-\sin(\pi/8)5\pi/8>0$$ 
 whenever $z\in[0,1/4]$. 
 
Similarly,  we have $\cos^2\left(\frac{z\pi}{2(z+1)}\right)(z+1)\geq 1$ for the second factor, since equality holds for $z=0$ and the derivative of the left hand side, i.e.
$$-2\cos\left(\frac{z\pi}{2(z+1)}\right)\sin\left(\frac{z\pi}{2(z+1)}\right)\frac{\pi}{2(z+1)}+\cos^2\left(\frac{z\pi}{2(z+1)}\right),$$
is positive for $z\in[0,1/4]$. This can be derived in the same spirit as above after splitting $[0,1/4]$ into $[0,1/5]$ and $[1/5,1/4]$.   

\item $y\in[(2^n-1)/2^n,2^n/(2^{n}+1)]$. In this case
we write $y=\frac{2^n}{2^{n}+1}-\frac{z}{2^n(2^n+1)}$ with $z\in[0,1]$ and observe 
$$\left|\sin\left(\frac{2^ny \pi}{2}\right)\right|=\sin\left(\frac{2^n\pi}{2(2^n+1)}+\frac{z\pi}{2(2^n+1)}\right).$$
In the following we aim for the inequality 
$$\sin\left(\frac{2^n\pi}{2(2^n+1)}+\frac{z\pi}{2(2^n+1)}\right)/\cos\left(\frac{2^n\pi}{2(2^n+1)}-\frac{z\pi}{2^{n+1}(2^n+1)}\right)\leq\tan\left(\frac{2^n\pi}{2(2^n+1)}\right).$$
We immediately notice that equality holds for $z=0$. Furthermore, we can show that the derivative is negative for $z\in[0,1]$. This is an easy consequence once we have established the inequality
\begin{equation}\label{eq:case2}
\cos\left(\frac{(2^n+z)\pi}{2^{n+1}(2^n+1)}\right)\sin\left(\frac{(2^n+z)\pi}{2(2^n+1)}\right)\geq2^n \cos\left(\frac{(2^n+z)\pi}{2(2^n+1)}\right)\sin\left(\frac{(2^n+z)\pi}{2^{n+1}(2^n+1)}\right)  
\end{equation}
for all $z\in[0,1]$ since, trivially, $\cos\left(\frac{2^n\pi}{2(2^n+1)}-\frac{z\pi}{2^{n+1}(2^n+1)}\right)=\sin\left(\frac{(2^n+z)\pi}{2^{n+1}(2^n+1)}\right)$. First of all we show that the above inequality \eqref{eq:case2} is satisfied for $z=0$. Note that  
\begin{equation*}
 \cos^2\left(\frac{\pi}{2(2^n+1)}\right)\geq 2^n\sin^2\left(\frac{\pi}{2(2^n+1)}\right)\quad\Leftrightarrow\quad1\geq (2^n+1)\sin^2\left(\frac{\pi}{2(2^n+1)}\right).
\end{equation*}
This in turn is the case iff 
\begin{equation*}
 \frac{\eta}{\eta+1}\geq \sin^2\left(\frac{\eta\pi}{2(\eta+1)}\right),
\end{equation*}
where $\eta=1/2^n$ and $\eta\in(0,1/4]$. The last inequality holds as  we have equality for $\eta=0$ and the derivative of the left-hand side is greater than the one of the right-hand side, since $2/\pi\geq \sin(\pi/5)\geq \sin(\eta\pi/(\eta+1))$. 

To finally verify \eqref{eq:case2} for all $z\in[0,1]$ we compute the derivatives of both sides  and observe that the one of the left-hand side oughtweighs the other, since obviously 
$$\sin\left(\frac{(2^n+z)\pi}{2^{n+1}(2^n+1)}\right)\sin\left(\frac{(2^n+z)\pi}{2(2^n+1)}\right)(4^n-1)\geq 0.$$
\end{itemize}
This concludes the proof of (\ref{eqn:gelfondsimple}).

To verify (\ref{eqn:gelfondouble}) we consider an arbitrary but fixed $y\in[2^n/(2^{n}+1),1]$. This interval, in turn, can be parametrized by $z\mapsto 2^n/(2^{n}+1)+z/(4^n(2^n+1))$, $z\in[0,4^n]$. We may now rewrite
\begin{gather*}
\left|{\sin\left(\frac{2^ny\pi}{2}\right)}\right|=\sin\left(\frac{(2^n-z/2^n)\pi}{2(2^n+1)}\right),\qquad
 \left|{\cos\left(\frac{2^ny\pi}{2}\right)}\right|=\cos\left(\frac{(2^n-z/2^n)\pi}{2(2^n+1)}\right),\\
\cos\left(\frac{y\pi}{2}\right)=\cos\left(\frac{(2^n+z/4^n)\pi}{2(2^n+1)}\right).
\end{gather*}
In order to be able to handle $|{\sin(4^ny\pi/2)}|$ we require one further case distinction. 
\begin{itemize}
\item $z\in[0,1]$: Here $|{\sin(4^ny\pi/2)}|=\sin((2^n+z)\pi/(2(2^n+1)))$. We need to derive the following inequality 
\begin{equation}
\label{eqn:case3}
 h_3(z)h_4(z)\leq \tan^2(2^n\pi/(2(2^n+1))), 
 \end{equation}
where
 $h_3(z)=\frac{\sin((2^n+z)\pi/(2(2^n+1)))}{\cos((2^n-z/2^n)\pi/(2(2^n+1)))}$ and $h_4(z)=\frac{\sin((2^n-z/2^n)\pi/(2(2^n+1)))}{\cos((2^n+z/4^n)\pi/(2(2^n+1)))}$. 
Obviously, $0\leq h_3(z)$ and $0\leq h_4(z)$ and for $z=0$ we even have equality in (\ref{eqn:case3}). In the following we show that the derivative of the left-hand side is negative for all $z\in[0,1]$. As a matter of fact, this is a consequence of
\begin{equation*}
 \frac{(h_3(z)h_4(z))'}{h_3(z)h_4(z)}=\frac{h_3^{\prime}(z)}{h_3(z)}+\frac{h_4^{\prime}(z)}{h_4(z)}\leq 0,
\end{equation*}
which in turn can be  rewritten  as
\begin{align*}
\frac{2\cdot 4^n(2^n+1)}{\pi}\frac{(h_3(z)h_4(z))'}{h_3(z)h_4(z)}&=4^n\cot\left(\frac{(2^n+z)\pi}{2(2^n+1)}\right)-2^n \tan\left(\frac{(2^n-z/2^n)\pi}{2(2^n+1)}\right)\\
&\quad -2^n \cot\left(\frac{(2^n-z/2^n)\pi}{2(2^n+1)}\right)+ \tan\left(\frac{(2^n+z/4^n)\pi}{2(2^n+1)}\right) \leq 0.
\end{align*}
Here, we used the identities
\begin{align*}
h_3^{\prime}(z)&= \frac{\pi}{2^{n+1}(2^n+1)}\frac{2^n\cos\left(\frac{(2^n-z/2^n)\pi}{2(2^n+1)}\right)\cos\left(\frac{(2^n+z)\pi}{2(2^n+1)}\right)-\sin\left(\frac{(2^n-z/2^n)\pi}{2(2^n+1)}\right)\sin\left(\frac{(2^n+z)\pi}{2(2^n+1)}\right)}{\cos^2\left(\frac{(2^n-z/2^n)\pi}{2(2^n+1)}\right)},\\
h_4^{\prime}(z)&= -h_3^{\prime}(-z/2^n)/2^n.
\end{align*}
For $z=0$ we have $\frac{(h_3(z)h_4(z))'}{h_3(z)h_4(z)}\leq 0$ due to the proof of  \eqref{eq:case2}.
For arbitrary $z\in(0,1)$ we have  
\begin{align*}
2^n\cot\left(\frac{(2^n+z)\pi}{2(2^n+1)}\right)\left(2^n-\frac{\cot\left(\frac{(2^n-z/2^n)\pi}{2(2^n+1)}\right)}{\cot\left(\frac{(2^n+z)\pi}{2(2^n+1)}\right)}\right) \leq \tan\left(\frac{(2^n-z/2^n)\pi}{2(2^n+1)}\right)\left(2^n-\frac{\tan\left(\frac{(2^n+z/4^n)\pi}{2(2^n+1)}\right)}{\tan\left(\frac{(2^n-z/2^n)\pi}{2(2^n+1)}\right)}\right).
\end{align*}
Indeed, as a consequence of \eqref{eq:case2} we obtain 
$$0\leq 2^n\cot\left(\frac{(2^n+z)\pi}{2(2^n+1)}\right)\leq \tan\left(\frac{(2^n-z/2^n)\pi}{2(2^n+1)}\right).$$
Furthermore, we have
$$2^n-\frac{\cot\left(\frac{(2^n-z/2^n)\pi}{2(2^n+1)}\right)}{\cot\left(\frac{(2^n+z)\pi}{2(2^n+1)}\right)}\leq 2^n-\frac{\tan\left(\frac{(2^n+z/4^n)\pi}{2(2^n+1)}\right)}{\tan\left(\frac{(2^n-z/2^n)\pi}{2(2^n+1)}\right)}$$
since its equivalent version
$$\tan\left(\frac{(2^n+z)\pi}{2(2^n+1)}\right)\geq \tan\left(\frac{(2^n+z/4^n)\pi}{2(2^n+1)}\right)$$
 is obviously satisfied.

It remains to show
$$2^n-\frac{\tan\left(\frac{(2^n+z/4^n)\pi}{2(2^n+1)}\right)}{\tan\left(\frac{(2^n-z/2^n)\pi}{2(2^n+1)}\right)}\geq 2^n-\frac{\tan\left(\frac{(2^n+1/4^n)\pi}{2(2^n+1)}\right)}{\tan\left(\frac{(2^n-1/2^n)\pi}{2(2^n+1)}\right)}\geq 0.$$
The first inequality is evident and for the second one we consider the equivalent formulation which is obtained by setting $\eta:=1/2^n$. I.e.,
\begin{align*}
\frac{(1-\eta)}{\eta}\cos\left(\frac{\eta\pi}{2}\right)\sin\left(\frac{\eta(1-\eta)\pi}{2}\right)\geq \sin\left(\frac{\eta^2\pi}{2}\right).
\end{align*}
This inequality is satisfied for $\eta=0$ as well as for $\eta=1/4$. The right-hand side is monotonically increasing on $[0,1/4]$, while the left-hand side is decreasing, as both $(1-\eta)^2\cos(\eta\pi/2)$ as well as $\sin(\eta(1-\eta)\pi/2)/(\eta(1-\eta))$ are decreasing. 
\item $z\in[1,4^n]$: We exploit the trivial fact   $|{\sin(4^ny\pi/2)}|\leq1$ and, hence, it remains to show that
$$\tan\left(\frac{(2^n-z/2^n)\pi}{2(2^n+1)}\right)\frac{1}{\cos\left(\frac{(2^n+z/4^n)\pi}{2(2^n+1)}\right)}\leq \tan^2\left(\frac{2^n \pi}{2(2^n+1)}\right).$$

For $z=1$ the inequality is true  due to the previous case. Moreover, for $z\to 4^n$   the left-hand side tends to $2^n$. Since $2^n \cos^2\left(\frac{2^n \pi}{2(2^n+1)}\right)\leq \sin^2\left(\frac{2^n \pi}{2(2^n+1)}\right)$ (cf. \eqref{eq:case2}) the sought inequality  is satisfied  for $z=4^n$ too. 
Once again, we need to check whether the left-hand side is decreasing or, equivalently,
$$2^{n+1}\cot\left(\frac{(2^n+z/4^n)\pi}{2(2^n+1)}\right)\geq {\sin\left(\frac{(2^n-z/2^n)\pi}{2^n+1}\right)},\qquad z\in(1,4^n).$$
This is true since we have   equality at the right end point $z=4^n$ and since the derivative of the left-hand side is dominated by the one of the right-hand side, as clearly
\begin{align*}
-\frac{1}{\sin^2\left(\frac{(2^n+z/4^n)\pi}{2(2^n+1)}\right)}&\leq -\cos\left(\frac{(2^n-z/2^n)\pi}{2^n+1}\right).\end{align*}
\end{itemize}
\end{proof}

\begin{proof}[Proof of Theorem~\ref{thm:trigprod}]
First of all, we notice that the implied constant in the sought inequality may depend on $n$. Hence, we can confine ourselves to the case $r\geq2n$, as the claim is trivially fulfilled otherwise.
 Let $j_0$, $0\leq j_0<n$, be the smallest non-negative integer such that $j_0+\ell$ is divisible by $n$. Then we have
 \begin{equation*}
  \Pi_{r,\bfc^{(\ell)}}(\alpha)\leq \prod_{j=j_0}^{r-1}\left| \cos\left( 2^{j}\alpha\pi+\frac{c_{j+\ell}\pi}{2} \right) \right|
  =\prod_{j=0}^{r-j_0-1} \left| \cos\left( 2^{j+j_0}\alpha\pi+\frac{c_j\pi}{2} \right)  \right|.
 \end{equation*}
Assuming $r-j_0=dn+\rho$ with $d\in\bN$ and $0\leq \rho<n$ we obtain further
\begin{align*}
 \Pi_{r,\bfc^{(\ell)}}(\alpha)
 &\leq
 \prod _{j=0}^{dn-1} \left| \cos \left(2^{j+j_0}\alpha\pi+\frac{c_j\pi}{2}\right)   \right| \\
 &=  
 \prod_{\delta=0}^{d-1}\left| \sin \left(2^{\delta n+j_0}\alpha\pi\right)   \right|\cdot \left| \cos \left(2^{\delta n+1+j_0}\alpha\pi\right)   \right| \cdots  \left| \cos \left(2^{ n(\delta+1)+j_0-1}\alpha\pi\right)   \right|\\
 &=\prod_{\delta=0}^{d-1} G_n\left( \left|\sin\left( 2^{\delta n+j_0}\alpha\pi \right)  \right| \right)\\
 &\leq \left(G_n(\xi_n) \right)^{d-1},
\end{align*}
where we used the fact that $\bfc$ has period $n$ in the second and Lemma~\ref{lemma:gelfond} in the last step. The claim now follows as $j_0\leq j_0+\rho< 2n$ and $r/n=d+(j_0+\rho)/n$.

\end{proof}

As it was already mentioned in the beginning of this section we verify the metric estimates for our trigonometric product.
\begin{proof}[Proof of Proposition~\ref{prop:trigprodmetric}]
Following the approaches of \cite{FouSom96} and \cite{AisOnp15} the proof is subdivided into four main steps. First of all, we establish the recurrence relation
 \begin{equation}
 \label{eqn:integralrecursion}
  \int_{0}^{1}\Pi_{nL,\bfc}(\alpha)\rmd\alpha=\int_{0}^{1}\Phi_{n,j}(\alpha) \Pi_{n(L-j),\bfc}(\alpha)\rmd\alpha
 \end{equation}
 with some function $\Phi_{n,j}:[0,1]\rightarrow\bR$, $j\leq L$, which admits the recursive representation
\begin{equation}
\label{eqn:recursion}
 \Phi_{n,j+1}(x)=\frac{1}{2^{n}} \sum_{k=0}^{2^{n}-1} \frac{|\sin(\pi x)|}{2^n |\cos((x+k)\pi/2^n)|} \Phi_{n,j}\left(\frac{x+k}{2^n} \right)=:\frac{1}{2^{n}} \sum_{k=0}^{2^{n}-1} g_n(x,j,k),\quad j\geq0
\end{equation}
with initial value $\Phi_{n,0}\equiv 1$. Secondly, we prove that
\begin{equation}
\label{eqn:symmetric}
 \Phi_{n,j}(x)=\Phi_{n,j}(1-x),
\end{equation}
i.e. $\Phi_{n,j}(x)$ is symmetric about $x=1/2$. As a third step we define
\begin{equation*}
 q_{n,j}(x)=\frac{\Phi_{n,j+1}(x)}{\Phi_{n,j}(x)},\quad M_{n,j}=\max_{0\leq x \leq 1}q_{n,j}(x),\quad\text{and}\quad m_{n,j}=\min_{0\leq x \leq 1}q_{n,j}(x).
\end{equation*}
 and deduce in complete analogy to \cite{AisOnp15} that 
 \begin{equation}
 \label{eqn:monotonicity}
  M_{n,j+1}\leq M_{n,j}\qquad \text{as well as} \qquad m_{n,j}\leq m_{n,j+1}.
 \end{equation}
Finally, we make use of the techniques developed by E.~Foury and C.~Mauduit in \cite{FouMet96} to show that the function $\Phi_{n,1}$ is convex. 

Considering  (\ref{eqn:integralrecursion})--(\ref{eqn:monotonicity}) we can define $\lambda_1(n)=\lim_{j\to\infty} m_{n,j}$ and $\lambda_2(n)=\lim_{j\to\infty}M_{n,j}$, and easily establish the inequality
$$
\lambda_1(n)^{L-k}\int_{0}^{1}\prod_{j=0}^{k-1}q_{n,j}(\alpha)\rmd\alpha \leq \int_{0}^{1}\Pi_{n L,\bfc}(\alpha)\rmd\alpha\leq \lambda_2(n)^{L-k}\int_{0}^{1}\prod_{j=0}^{k-1}q_{n,j}(\alpha)\rmd\alpha
$$
for each $k$. This immediately implies (\ref{eqn:trigmetric2}) and (\ref{eqn:trigmetric1}) follows similarly from (\ref{eqn:monotonicity}) together with the convexity of $\Phi_{n,1}$ by putting $\mu(n)=M_{n,0}=\Phi_{n,1}(1/2)$.

Let us now derive the recurrence (\ref{eqn:integralrecursion}). We do so by demonstrating the first step, i.e. for $j=1$, and the general version follows from iteratively applying the arguments below. Similarly as in \cite{FouSom96}*{(4.1)}, we may rewrite the left-hand side as follows
\begin{align*}
 \int_{0}^{1}\Pi_{n L,\bfc}(\alpha)\rmd\alpha
 &= 
 \int_{0}^{1}  \Pi_{n,\bfc}(\alpha)  \Pi_{n(L-1),\bfc}(2^{n}\alpha) \rmd\alpha 
 =
 \sum_{k=0}^{2^{n}-1}\int_{k/2^{n}}^{(k+1)/2^{n}} \Pi_{n,\bfc}(\alpha)  \Pi_{n(L-1),\bfc}(2^{n}\alpha) \rmd\alpha \\
 &= 
  \sum_{k=0}^{2^{n}-1} \frac{1}{2^{n}}\int_{0}^{1} \Pi_{n,\bfc}\left( \frac{\tilde{\alpha}+k}{2^{n}} \right) \Pi_{n(L-1),\bfc}(\tilde{\alpha}+k)\rmd\tilde{\alpha} \\
 &= \int_{0}^{1} \Phi_{n,1}(\tilde{\alpha})\Pi_{n(L-1),\bfc}(\tilde{\alpha}+k)\rmd\tilde{\alpha},
\end{align*}
where we used the transformation $\tilde{\alpha}=2^{n}\alpha-k$ in the third and the periodicity of $\Pi_{n(L-1),\bfc}$ in the last step, and where we abbreviated
$$
\Phi_{n,1}(\alpha)=\frac{1}{2^{n}}\sum_{k=0}^{2^{n}-1}\Pi_{n,\bfc}\left( \frac{\alpha+k}{2^{n}} \right).
$$
This verifies (\ref{eqn:integralrecursion}). Observe that by repeated applications of the identity $\sin(2x)=2\sin(x)\cos(x)$ we obtain further
$$
\frac{|\sin(\alpha\pi)|}{|\cos(\frac{(\alpha+k)\pi}{2^{n}})|}=  \frac{|\sin((\alpha+k)\pi)|}{|\cos(\frac{(\alpha+k)\pi}{2^{n}})|}  =      \frac{2\left|\sin\left(\frac{(\alpha+k)\pi}{2}  \right)\right|}{|\cos(\frac{(\alpha+k)\pi}{2^{n}})|} = \ldots = 2^{n}\Pi_{n,\bfc}\left( \frac{\alpha+k}{2^{n}} \right),
$$
which is (\ref{eqn:recursion}).
 
For (\ref{eqn:symmetric}) we notice that the relation $g_n(x,j,k)=g_n(1-x,j,2^n-1-k)$ can be proven by induction on $j$ and (\ref{eqn:recursion}) without much effort. It is then easy to see that $\Phi_{n,j}(x)$ is symmetric about $x=1/2$.

To approach (\ref{eqn:monotonicity}) we closely follow the corresponding lines of \cite{AisOnp15}*{Proof of Lemma~7} to see that we have for each $\alpha\in[0,1]$
\begin{multline*}
 q_{n,j}(\alpha)
 =
 \frac{\Phi_{n,j+1}(\alpha)}{\Phi_{n,j}(\alpha)}
 =
 \frac{\sum_{k=0}^{2^{n}-1}\frac{|\sin(\alpha\pi)|}{\left|\cos\left((\alpha+k)\pi/2^{n} \right)\right|} \Phi_{n,j}\left( \frac{\alpha+k}{2^{n}} \right)}{\sum_{k=0}^{2^{n}-1}\frac{|\sin(\alpha\pi)|}{\left|\cos\left((\alpha+k)\pi/2^{n} \right)\right|} \Phi_{n,j-1}\left( \frac{\alpha+k}{2^{n}} \right)}
 \\ \leq
 \frac{\sum_{k=0}^{2^{n}-1}\frac{|\sin(\alpha\pi)|}{\left|\cos\left((\alpha+k)\pi/2^{n} \right)\right|} \Phi_{n,j-1}\left( \frac{\alpha+k}{2^{n}} \right)M_{n,j-1}}{\sum_{k=0}^{2^{n}-1}\frac{|\sin(\alpha\pi)|}{\left|\cos\left((\alpha+k)\pi/2^{n} \right)\right|} \Phi_{n,j-1}\left( \frac{\alpha+k}{2^{n}} \right)}
=
 M_{n,j-1},
\end{multline*}
where we used (\ref{eqn:integralrecursion}) in the second step. Hence, $M_{n,j}\leq M_{n,j-1}$. In the same spirit it is possible to derive $m_{n,j}\geq m_{n,j-1}$.

Let us now focus on the concavity of $\Phi_{n,1}$ using techniques from \cite{FouMet96}.
For $n=1$ this was shown in \cite{FouSom96} and hence we assume $n\geq2$. Furthermore, observe  that
 $$
 2^{2n}\Phi_{n,1}(x)=\sum_{k=0}^{2^n-1}\frac{\sin(\pi x)}{\left| \cos\left(\frac{(x+k)}{2^n}\pi\right) \right|} =\sum_{k=0}^{2^{n-1}-1}\sin(\pi x)\left(  \frac{1}{\cos\left(\frac{(x+k)}{2^n}\pi\right)} + \frac{1}{\cos\left(\frac{(k+1-x)}{2^n}\pi\right)}\right).
 $$
 For $0\leq u\leq 2^{-n}$ and $0\leq k<2^{n-1}$ we introduce the functions
 \begin{equation*}
  \Psi_{k}^{(1)}(u)=\frac{\sin(2^n\pi u)}{\cos\pi\left(u+\frac{k}{2^n}\right)}\qquad\text{and}\qquad \Psi_{k}^{(2)}(u)=\frac{\sin(2^{n}\pi u)}{\cos\pi\left(\frac{k+1}{2^n}-u \right)}.
 \end{equation*}
After the change of variable $x=2^{n}u$ it remains to show that $\sum_{k=0}^{2^{n-1}-1}\left(\Psi_{k}^{(1)}(u)+\Psi_{k}^{(2)}(u)\right)$ is concave. It is immediate that
$$
\Psi_{k}^{(1)}(u)=\frac{(-1)^{k}\sin\left(2^{n}\pi\left(u+\frac{k}{2^n}\right)\right)}{\cos\left(\pi \left(u+\frac{k}{2^n}\right)\right)},\qquad\text{and}\qquad \Psi_{k}^{(2)}(u)=\frac{(-1)^{k}\sin\left(2^{n}\pi\left(\frac{k+1}{2^n}-u\right)\right)}{\cos\left(\pi \left(\frac{k+1}{2^n}-u\right)\right)}.
$$
Using the well-known trigonometric identities
$
\sin(2x)=2\sin(x)\cos(x)$ as well as $\sin(x)\cos(y)=\frac12(\sin(x-y)+\sin(x+y))$
we can inductivley prove that
\begin{equation}
\label{eqn:sintosum}
\frac{\sin2^{n}x}{\cos x}=2\sum_{l=1}^{2^{n-1}}(-1)^{l}\sin((2l-1)x).
\end{equation}
Let us focus on $\Psi_{k}^{(1)}$ first. As a consequence of (\ref{eqn:sintosum}) we may rewrite
\begin{align*}
\sum_{k=0}^{2^{n-1}-1}\Psi_{k}^{(1)}(u)&=2\sum_{l=1}^{2^{n-1}}(-1)^l\sum_{k=0}^{2^{n-1}-1}(-1)^k \sin\left( (2l-1)\pi u+k \frac{2l-1}{2^n}\pi\right)\\
&=\sum_{l=1}^{2^{n-1}}(-1)^l\sum_{k=0}^{2^{n-1}-1}(-1)^k \cos\left( (2l-1)\pi u-\pi/2+k \frac{2l-1}{2^n}\pi\right).
\end{align*}
We invoke the following formula from \cite{FouMet96}*{p.~345}, 
$$
\sum_{k=0}^{m-1}(-1)^{k}\cos(a+hk)=\frac{\cos\left( a+\frac{m-1}{2}h+\frac{m-1}{2}\pi \right) \sin\left(  \frac{mh}{2}+\frac{m\pi}{2}\right)}{\cos\frac{h}{2}}
$$
with $m=2^{n-1}, a=(2l-1)\pi u-\pi/2, h=2^{-n}(2l-1)\pi$ to find that
\begin{multline*}
 \sum_{k=0}^{2^{n-1}-1}\Psi_{k}^{(1)}(u) 
 =2\sum_{l=1}^{2^{n-1}}(-1)^{l}\frac{\sin\left( (2l-1)\pi u+\frac{(2^{n-1}-1)(2l-1)}{2^{n+1}}\pi+\frac{2^{n-1}-1}{2}\pi \right)\sin\left( \frac{(2l-1)}{4}\pi+\frac{2^{n-1}}{2}\pi \right)}{\cos\left( \frac{2l-1}{2^{n+1}}\pi\right)}
 \\=
 2\sum_{l=1}^{2^{n-1}}(-1)^{l+1}\frac{\cos\left( (2l-1)\pi\left(u+\frac{2^{n-1}-1}{2^{n+1}} \right) \right)\sin\left( \frac{2l-1}{4}\pi \right)}{\cos\left(\frac{2l-1}{2^{n+1}}\pi\right)}.
\end{multline*}
Observe that  the simplification of the numerator in the last line follows a different line of reasoning for $n=2$ as for $n\geq3$, yet the result remains the same. Using $\Psi_{k}^{(2)}(u)=\Psi_{k}^{(1)}(1/2^n-u)$ we rewrite
\begin{equation*}
 \sum_{k=0}^{2^{n-1}-1}\Psi_{k}^{(2)}(u)
 =
 2\sum_{l=1}^{2^{n-1}}(-1)^{l+1}\frac{\cos\left( (2l-1)\pi \left( \frac{2^{n-1}+1}{2^{n+1}}-u\right) \right) \sin\left( \frac{2l-1}{4}\pi \right)}{\cos\left(\frac{2l-1}{2^{n+1}}\pi\right)}.
\end{equation*}
Considering the identity $2\cos(x)\cos(y)=\cos(x+y)+\cos(x-y)$ with $x=(2l-1)\pi/4$ and $y=(2l-1)\pi(u-2^{-n-1})$ and, subsequently, $\sin((2l-1)\pi/4)\cos((2l-1)\pi/4)=(-1)^{l+1}/2$ we can simplify as follows
$$
\sum_{k=0}^{2^{n-1}-1}\left(\Psi_{k}^{(1)}(u)+\Psi_{k}^{(2)}(u)\right) = 2 \sum_{l=1}^{2^{n-1}} \frac{\cos\left((2l-1)\pi\left( u-\frac{1}{2^{n+1}}\right) \right)}{\cos\left( \frac{2l-1}{2^{n+1}}\pi\right)}.
$$
Note that $(2l-1)\pi\left( u-\frac{1}{2^{n+1}}\right)\in(-\pi/2,\pi/2)$ and $\frac{2l-1}{2^{n+1}}\pi\in(0,\pi/2)$. Therefore, each summand is a concave function and, hence, so is $\Phi_{n,1}$.
\end{proof}

We want to point out that, since $(M_{n,j})_{j\geq 0}$ is a	 decreasing and $(m_{n,j})_{j\geq 0}$ is an increasing sequence, we are in a position to numerically compute lower and upper bounds for both $\lambda_1(n)$ and $\lambda_2(n)$ for small values of $n$ on the basis of the recurrence relation (\ref{eqn:recursion}). Some approximative values of $1+\log_{2^n}\lambda_i(n)$, $i\in\{1,2\}$, are provided in Figure~\ref{fig:exponents}. It needs to be mentioned that Fouvry and Mauduit ensured that $\lambda_1(1)=\lambda_2(1)$ in \cite{FouSom96} . As our main interest lies in the exponent of the star discrepancy we settle for our approximations at the moment  and keep a generalization of the result of Fouvry and Mauduit for larger $n\in\bN$ for future research.

\section{Proof of the main theorems}
\label{sec:proofs}
\begin{proof}[Proof of Theorem \ref{thm:genupper}]
We begin with the inequality (\ref{eqn:genupper}) from Proposition~\ref{prop:genupper}. Considering Lemma~\ref{lem:alphafinitetype} as well as Theorem~\ref{thm:trigprod} we obtain
\begin{align*}
 N\Ds(z_k(n))&\ll_{n,\alpha}  \frac{N}{K}+{\frac{N}{H}}^{1+\eps}+N^\eps +H \log K+\left( \cot\frac{\pi}{2(2^n+1)} \right)^{\frac{\log_2N}{n}} \log K \log H\\
 &\ll   \frac{N}{K}+{\frac{N}{H}}^{1+\eps}+N^\eps +H \log K +N^{a(n)+\eps}.
\end{align*}
Putting $H=\lfloor\sqrt{N}\rfloor$ and $K=N$ and considering $a(n)\geq \log_4 3\geq 1/2$ finalizes the proof.
\end{proof}

\begin{remark}\label{rem:41}
The result of Theorem~\ref{thm:genupper} may be sharpened by replacing $N^{\eps}$ by a proper power of $\log N$. Moreover, we need to add that it is valid for an even wider class of numbers $\alpha$. Indeed, suppose $\alpha$ is of finite type $\sigma$, i.e. $\|q\alpha\|\geq c_{\alpha,\eps}q^{-\sigma+\eps}$ for all $q\in\bZ\setminus\{0\}$ (see, e.g., \cite{NieOnt09}). For such $\alpha$ the following discrepancy bound can be derived (cf. proof of \cite{NieImp12}*{Theorem~1})
$$N\Ds(z_k(n))\ll_{n,\alpha,\eps} 
  N^{1-1/(\sigma+1)+\eps}+N^{a(n)+\eps}.$$ 
Balancing both terms  yields  a bound on $\sigma$ depending on $n$. Note that almost all $\alpha$ are of finite type $1$, hence Theorem~\ref{thm:genupper} holds  for almost all $\alpha\in(0,1)$ in the sense of the Lebesgue measure as well. Nevertheless, this metric bound is far from being optimal, considering  Theorem~\ref{thm:metric}. 
\end{remark}

\begin{proof}[Proof of Theorem~\ref{thm:genlower}]
We choose $N$ to be of the form $N=2^{nL}$, $L\in\bN$. Subsequently, we refer to Proposition~\ref{prop:genlower} to find that $N\Ds (z_k(n))\geq 2^{nL-1}\Pi_{nL,\bfc}(\alpha)-1/(4\|\alpha\|)$.  In what follows we abbreviate $\alpha_0:=\alpha-\beta$ as well as $\delta_{\ell}:=\{2^{\ell}\beta\}$.  Due to several well known trigonometric identities we may rewrite
\begin{align*}
 |\sin(2^{n\lambda}\alpha \pi)|&=|\sin(2^{n\lambda}\alpha_0 \pi) \cos(\delta_{n\lambda}\pi)\pm\cos(2^{n\lambda} \alpha_0 \pi)\sin(\delta_{n\lambda} \pi)|,\\
 |\cos(2^{n\lambda+\nu}\alpha \pi)| &= |\cos(2^{n\lambda+\nu}\alpha_0 \pi) \cos(\delta_{n\lambda+\nu}\pi)\pm\sin(2^{n\lambda+\nu}\alpha_0\pi) \sin(\delta_{n\lambda+\nu}\pi)|. 
\end{align*}
Using these as well as 
\begin{equation*}
|\sin(2^{n\lambda}\alpha_0 \pi)| = |\cos(\pi/(2^{n+1}+2))|, \qquad\text{and} \qquad  |\cos(2^{n\lambda}\alpha_0 \pi)|=|\sin(\pi/(2^{n+1}+2))|
\end{equation*}
we further obtain
\begin{equation}
\label{eqn:sclambdanu}
2^{nL}\Pi_{nL,\bfc}(\alpha)
= 
N^{a(n)} \Pi_{nL,\bfc}(\alpha)\left(\Pi_{nL,\bfc}(\alpha_0)\right)^{-1}=N^{a(n)}\prod_{\lambda=0}^{L-1}\left(S_{\lambda}\prod_{\nu=1}^{n-1} C_{\lambda,\nu}\right),
\end{equation}
where 
\begin{align*}
 S_{\lambda}&=\left| \cos(\delta_{n\lambda}\pi)\pm \sin(\delta_{n\lambda}\pi) \tan\left( \frac{\pi}{2(2^{n}+1)} \right)\right|, \\
 C_{\lambda,\nu} &= \left| \cos(\delta_{n\lambda+\nu}\pi)\pm\sin(\delta_{n\lambda+\nu}\pi)\tan\left(\frac{2^{\nu}\pi}{2(2^{n}+1)} \right)  \right|.
\end{align*}
Since, trivially, $1-\cos(x)\leq \sqrt{6}x$ and $\sin x\leq x$ for all $x\geq0$ we have
\begin{equation*}
 S_{\lambda}\geq 1-\delta_{n\lambda}\left( \sqrt{6}+\pi \tan\left( \frac{\pi}{2(2^{n}+1)} \right )\right)=: 1- \delta_{n\lambda}c_0(n).
\end{equation*}
A similar argument gives
\begin{equation*}
 C_{\lambda,\nu}\geq 1- \delta_{n\lambda+\nu}\left(  \sqrt{6}+\pi \tan\left(\frac{2^{\nu}\pi}{2(2^{n}+1)} \right)   \right) =: 1-\delta_{n\lambda+\nu}c_\nu(n),\quad 1\leq\nu<n.
\end{equation*}
On the other hand, for fixed $n$ we can define the numbers $\Lambda_0,\Lambda_1,\ldots,\Lambda_L$ by the relations
\begin{equation*}
 \Lambda_0=\inf_{\lambda\geq0}|\sin(2^{n\lambda}\alpha\pi)/\sin(2^{n\lambda}\alpha_0\pi)| 
\end{equation*}
and 
\begin{equation*}
 \Lambda_\nu=\inf_{\lambda\geq 0}|\cos(2^{n\lambda+\nu}\alpha\pi)/\cos(2^{n\lambda+\nu}\alpha_0\pi)|,\qquad 1\leq\nu<n.
\end{equation*}
Due to the special structure of $\beta=\sum_{k\geq0}4^{-2^{k}}$ we know that these numbers are bounded by positive constants from below, as $\inf\left\{|\{2^\ell\alpha\}-\kappa|:\kappa\in\{0,1,1/2\},\ell\in\bN_0\right\}>0$. 
We may thus continue with (\ref{eqn:sclambdanu}) and find a constant $\overline{c}(n)>0$  such that 
$\max\{1-c_\nu(n) x,\Lambda_\nu\}\geq e^{-\overline{c}(n) x}$ for all $x\geq 0$ and every $\nu\in\{0,1,\ldots,n-1\}$. Hence, 
\begin{multline*}
 2^{nL}\Pi_{nL,\bfc}(\alpha) \gg N^{a(n)} \prod_{\lambda=0}^{L-1}\prod_{\nu=0}^{n-1} \max\left\{ (1-\delta_{n\lambda+\nu}c_\nu(n)),\Lambda_\nu\right\} \\ \geq N^{a(n)}\prod_{\ell=0}^{nL-1}e^{-\overline{c}(n)\delta_{\ell}} \geq N^{a(n)} e^{-{c^*}(n)\log nL}\gg N^{a(n)-\eps}, \quad\text{ with } c^*(n)>0,
\end{multline*}
where we used  $\sum_{\ell=0}^K\delta_\ell\leq \tilde{c}\log K$ for an absolute constant $\tilde{c}>0$ and $K$ large enough. 
\end{proof}

For the proof of Theorem~\ref{thm:metric}, we heavily depend on the ideas and strategies developed in \cite{AisOnp15} which were refined and extended in \cite{AisMet16}. 

\begin{proof}[Proof of Theorem~\ref{thm:metric}]

The lower bound can easily be derived by setting $N=2^{nL}$, invoking Lemma~\ref{lem:LB} and applying the inequality  \eqref{eqn:trigmetric2} from Proposition~\ref{prop:trigprodmetric} together with the estimate
$$\int_{[0,1]}\frac{|\sin(2^k\pi \alpha)|}{\sin(\pi \alpha)}\rmd\alpha\ll k,\qquad k\geq1.$$ 

 For the upper bound we set $K=H=N$ in Proposition~\ref{prop:genupper}. In view of the second part of Lemma~\ref{lem:alphafinitetype} it remains to show that
$$\sum_{\ell=1}^{\lfloor\log_2N\rfloor}\sum_{h=1}^{\lfloor N/2^\ell\rfloor}\frac{1}{h}\sum_{r=0}^{\lfloor\log_2N\rfloor-\ell}2^r\Pi_{r,\bfc^{(\ell)}}(2^\ell h\alpha)\ll_{\alpha,\eps,n}N^{\log_{2^n}(\lambda_2(n)) +1+\eps}$$
for all $\eps>0$ and almost all $\alpha\in(0,1)$ in the sense of the Lebesgue measure. 

As a first step  we dispose of the superscript ${(\ell)}$ in $\bfc^{(\ell)}$ by setting $\kappa(\ell)=n-\ell\mod{n}$ and splitting the sum over $r$, which gives
\begin{align*}
\sum_{r=0}^{\lfloor\log_2N\rfloor-\ell}2^r\Pi_{r,\bfc^{(\ell)}}(2^\ell h\alpha)&\ll
2^{\kappa(\ell)} +\sum_{r=\kappa(\ell)}^{\lfloor\log_2N\rfloor-\ell}2^{r-\kappa(\ell)}2^{\kappa(\ell)}\Pi_{r-\kappa(\ell),\bfc^{(0)}}(2^{\ell+\kappa(\ell)}h\alpha)\\
\ll_n &1 +\sum_{r=0}^{\lfloor\log_2N\rfloor-\ell-\kappa(\ell)}2^{r}2^{\kappa(\ell)}\Pi_{r,\bfc}(2^{\ell+\kappa(\ell)}h\alpha)\\
\ll_n &1 +\sum_{j=0}^{\lfloor(\log_2N)/n\rfloor}2^{nj}\sum_{k=0}^{n-1}2^{k+\kappa(\ell)}\Pi_{nj,\bfc}(2^{\ell+\kappa(\ell)}h\alpha).
\end{align*}
Hence,
\begin{multline}
\label{eqn:sumj}
\sum_{\ell=1}^{\lfloor\log_2N\rfloor}\sum_{h=1}^{\lfloor N/2^\ell\rfloor}\frac{1}{h}\sum_{r=0}^{\lfloor\log_2N\rfloor-\ell}2^r\Pi_{r,\bfc^{(\ell)}}(2^\ell h\alpha)\\
\ll_n  (\log N)^2  +\sum_{j=0}^{\lfloor(\log_2N)/n\rfloor}2^{nj}\sum_{\ell=1}^{\lfloor\log_2N\rfloor}\sum_{h=1}^{N}\frac{1}{h}\sum_{k=0}^{n-1}2^{k+\kappa(\ell)}\Pi_{nj,\bfc}(2^{\ell+\kappa(\ell)}h\alpha).
\end{multline}
We fix $\eps>0$ and set $\mu_{n}:=\lceil (1+\log_{2^n}(\lambda_2(n)))^{-1}\rceil$. Proposition~\ref{prop:trigprodmetric} implies 
\begin{equation}\label{eqn:int1}\int_{[0,1]}\left(\sum_{\ell=1}^{jn\mu_n}\sum_{h=1}^{2^{jn\mu_n}}\frac{1}{h}\sum_{k=0}^{n-1}2^{k+\kappa(\ell)}\Pi_{nj,\bfc}(2^{\ell+\kappa(\ell)}h\alpha)\right)d\alpha \leq c(n) \left(2^{nj}\right)^{\log_{2^n}(\lambda_2(n))+\eps/2}\end{equation}
 for all $j>j_0(n,\eps)$, where $c(n)>0$ is an absolute constant only depending on $n$. 
For all positive integers $j$ and for $\eps>0$ we define the events 
$$G_j:=\left\{ \alpha\in(0,1):\sum_{\ell=1}^{jn\mu_n}\sum_{h=1}^{2^{jn\mu_n}}\frac{1}{h}\sum_{k=0}^{n-1}2^{k+\kappa(\ell)}\Pi_{nj,\bfc}(2^{\ell+\kappa(\ell)}h\alpha)>c(n) \left(2^{nj}\right)^{\log_{2^n}(\lambda_2(n))+\eps}\right\}.$$ 
In \eqref{eqn:int1} we have already seen that 
$$\bP(G_j)\leq c(n)\left(2^{nj}\right)^{-\eps/2},\qquad j>j_0(n,\eps).$$ 
Thus, the Borel--Cantelli lemma implies that  for almost all $\alpha\in(0,1)$ we have
 $$\sum_{\ell=1}^{jn\mu_n}\sum_{h=1}^{2^{jn\mu_n}}\frac{1}{h}\sum_{k=0}^{n-1}2^{k+\kappa(\ell)}\Pi_{nj,\bfc}(2^{\ell+\kappa(\ell)}h\alpha)\leq c(n) \left(2^{nj}\right)^{\log_{2^n}(\lambda_2(n))+\eps},\qquad j\geq j_1(n,\eps).$$

Now let $\eps>0$, $N>2^{n\mu_nj_1(n,\eps)}$ and $\alpha \in(0,1)$ such that the above inequality holds. We split the entire sum over $j$ in (\ref{eqn:sumj}) at $M=\lceil\log_{2^n}N/\mu_n\rceil\geq j_1(n,\eps)$ and may thus finalize the proof of the metric upper bound by the estimates
\begin{equation*}
\sum_{j=0}^{M-1}2^{nj}\sum_{\ell=1}^{\lfloor\log_2N\rfloor}\sum_{h=1}^{N}\frac{1}{h}\sum_{k=0}^{n-1}2^{k+\kappa(\ell)}\Pi_{nj,\bfc}(2^{\ell+\kappa(\ell)}h\alpha)
\ll_n 2^{nM}N^{\eps}\ll_n N^{1+\log_{2^n}(\lambda_2(n))+\eps}
\end{equation*}
and 
\begin{multline*}
 \sum_{j=M}^{\lfloor(\log_2N)/n\rfloor}2^{nj}\sum_{\ell=1}^{\lfloor\log_2N\rfloor}\sum_{h=1}^{N}\frac{1}{h}\sum_{k=0}^{n-1}2^{k+\kappa(\ell)}\Pi_{nj,\bfc}(2^{\ell+\kappa(\ell)}h\alpha)\\
 \leq \sum_{j=M}^{\lfloor(\log_2N)/n\rfloor}2^{nj}\sum_{\ell=1}^{jn\mu_n}\sum_{h=1}^{2^{jn\mu_n}}\frac{1}{h}\sum_{k=0}^{n-1}2^{k+\kappa(\ell)}\Pi_{nj,\bfc}(2^{\ell+\kappa(\ell)}h\alpha)\\
 \ll_n  \sum_{j=M}^{\lfloor(\log_2N)/n\rfloor}(2^{nj})^{1+\log_{2^n}(\lambda_2(n))+\eps}
\ll_n N^{{1+\log_{2^n}(\lambda_2(n))+\eps}}.
\end{multline*}

We still need to verify the limit statement in (\ref{eqn:lim}). Evidently,  $\lambda_2(n)\leq \max_{x\in[0,1]}\Phi_{n,1}(x)=\mu(n)=\frac{1}{4^n}\sum_{k=0}^{2^n-1}|\cos((1+2k)\pi/2^{n+1})|^{-1}$ (cf. proof of Proposition~\ref{prop:trigprodmetric}). Therefore, it suffices to show that
 \begin{equation*}
  \lim_{n\to\infty} \log_{2^n}\mu(n)=-1.
 \end{equation*}
 To this end we rewrite
\begin{align*}
\log_{2^n}\Phi_{n,1}(1/2)&=-1 +\log_{2^n}\left(\frac{1}{2^{n}}\sum_{k=0}^{2^{n}-1}\frac{1}{|\cos(\pi(1/2+k)/2^n)|}\right)\\
\\&=-1 +\log_{2^n}\left(\frac{1}{2^{n-1}}\sum_{k=0}^{2^{n-1}-1}\frac{1}{\cos(\pi(1/2+k)/2^n)}\right)\\
&=-1 +\frac{1}{\log2}\log\left(\left(\frac{1}{2^{n-1}}\sum_{k=0}^{2^{n-1}-1}\frac{1}{\cos(\pi(1/2+k)/2^n)}\right)^{1/n}\right).
\end{align*}

Now, obviously 
$$\left(\frac{1}{2^{n-1}}\sum_{k=0}^{2^{n-1}-1}\frac{1}{\cos(\pi(1/2+k)/2^n)}\right)^{1/n}\geq 1.$$
On the other hand, we can make use of the trivial estimate $\sin(\pi x/2)\geq x$ for $x\in[0,1]$ to obtain further
\begin{multline*}
\sum_{k=0}^{2^{n-1}-1}\frac{1}{\cos(\pi(1/2+k)/2^n)}= \sum_{k=0}^{2^{n-1}-1}\frac{1}{\sin(\pi(1/2+k)/2^n)}\\
\leq \sum_{k=0}^{2^{n-1}-1}\frac{1}{(1/2+k)/2^{n-1}}
=2^{n-1}\sum_{k=0}^{2^{n-1}-1}\frac{1}{1/2+k}\leq 2^{n-1}(2+n \log 2)
\end{multline*}
Substituting this in the original expression we thus obtain 
$$\left(\frac{1}{2^{n-1}}\sum_{k=0}^{2^{n-1}-1}\frac{1}{\cos(\pi(1/2+k)/2^n)}\right)^{1/n}\leq n^{1/n} 2^{1/n}\log^{1/n} 2\stackrel{n\to \infty}{\longrightarrow} 1.$$
\end{proof} 

\section*{Acknowledgements}
The authors are extremely grateful to E. Fouvry for pointing out the reference \cite{FouMet96} which helped to modify several arguments in the proof of Proposition \ref{prop:trigprodmetric}, thus avoiding many technicalities.

\begin{bibdiv}
 \begin{biblist}
  \bibselect{mybibl}
 \end{biblist}

\end{bibdiv}

\end{document}